\theoremstyle{plain}
\newtheorem{theorem}{Theorem}[section]
\newtheorem{lemma}[theorem]{Lemma}
\newtheorem{corollary}[theorem]{Corollary}
\theoremstyle{definition}
\newtheorem{definition}[theorem]{Definition}
\newtheorem{example}[theorem]{Example}
\newtheorem{assumption}[theorem]{Assumption}
\theoremstyle{remark}
\newcommand{\E}{\mathbb{E}}
\newcommand{\grad}{\nabla}
\newcommand{\tsum}{\textstyle\sum} 
\newcommand{\iprod}[2]{\langle #1, #2 \rangle}
\newcommand{\abs}[1]{\left|{#1}\right|}
\newcommand{\cO}{\mathcal{O}}
\newcommand{\N}{\mathbb{N}}
\newcommand{\Var}{\text{Var}}
\newcommand{\F}{\mathcal{F}}
\icmltitlerunning{Moment Estimation Improves Instance Complexity}
\DeclarePairedDelimiterX{\inp}[2]{\langle}{\rangle}{#1, #2}
\DeclarePairedDelimiterX{\cbr}[1]{\{}{\}}{#1} 
\DeclarePairedDelimiterX{\rbr}[1]{(}{)}{#1} 
\DeclarePairedDelimiterX{\sbr}[1]{[}{]}{#1}
\begin{document}

\twocolumn[
\icmltitle{ Beyond Worst-Case Analysis in Stochastic Approximation:\\ Moment Estimation Improves Instance Complexity}



\icmlsetsymbol{equal}{*}

\begin{icmlauthorlist}
\icmlauthor{Jingzhao Zhang}{thu}
\icmlauthor{Hongzhou Lin}{am}
\icmlauthor{Subhro Das}{ibm}
\icmlauthor{Suvrit Sra}{MIT}
\icmlauthor{Ali Jadbabaie}{MIT}

\end{icmlauthorlist}

\icmlaffiliation{thu}{IIIS, Tsinghua University}
\icmlaffiliation{MIT}{Massachusetts Institute of Technology}
\icmlaffiliation{am}{Amazon}
\icmlaffiliation{ibm}{MIT-IBM Watson AI Lab, IBM Research}

\icmlcorrespondingauthor{Jingzhao Zhang}{jingzhaoz@mail.tsinghua.edu.cn}

\icmlkeywords{Stochastic approximation, gradient methods, optimization}

\vskip 0.3in
]



\printAffiliationsAndNotice{}

\begin{abstract}
We study oracle complexity of gradient based methods for stochastic approximation problems. Though in many settings optimal algorithms and tight lower bounds are known for such problems, these optimal algorithms do not achieve the best performance when used in practice. We address this theory-practice gap by focusing on \emph{instance-dependent complexity} instead of  worst case complexity. In particular, we first summarize known instance-dependent complexity results and categorize them into three levels. We identify the domination relation between different levels and propose a fourth instance-dependent bound that dominates existing ones. We then provide a sufficient condition according to which an adaptive algorithm with moment estimation can achieve the proposed bound without knowledge of noise levels. Our proposed algorithm and its analysis provide a theoretical justification for the success of moment estimation as it achieves improved instance complexity.

\end{abstract}


\section{Introduction}

Stochastic approximation (SA) methods, introduced and analyzed in the seminal works~\cite{Robbins1951,Fabian1968, Polyak1987, Kushner2003, Moulines2011}, are central to modern machine learning algorithms. Due to their wide applicability and importance, they have been extensively studied. It is now well established that an algorithm as  simple as stochastic gradient descent achieves  minimax optimal rates in many  settings~\cite{Nesterov2013, Agarwal2009, Fang2018, Arjevani2019, Lin2015,AllenZhu2016b, Ghadimi2012}.



These well-studied stochastic methods are, however,  optimal in the sense of a worst case measure of complexity:
\begin{align}\label{eq:complexity}
	\inf_{A_\theta \in \mathcal{A}} \sup_{\substack{x_1 \in \mathbb{R}^d , f \in  \mathcal{F}}} T_\epsilon(A_\theta[f, x_1], f),
\end{align}
where $T_\epsilon(A_\theta[f, x_1], f)$ denotes the number of iterations to obtain an $\epsilon$-solution initialising at~$x_1$. In the most common setting of convex problems, the $\epsilon$-solution is measured by the function suboptimality gap:
\begin{align}\label{eq:T-det}
	T_\epsilon(\{x_t\}_{t \in \mathbb{N}}, f) := \inf \cbr*{t \in \mathbb{N} | f(x_t) - f(x^*) \le \epsilon}.
\end{align}
The worst case complexity measure is naturally achieved in certain extreme cases within a function class, which may be quite rare in practice. Empirical observations~\cite{Defazio2018, Reddi2019} suggests that a gap does exist between worst case analysis and  empirical performance.

To reduce the mistmatch between theory and practice, it is necessary to go beyond worst case scenarios, and to include more domain-specific nuances. One alternative is to consider average case complexity analysis. Such an approach is well established in theoretical computer science,  and is used to explain the superior empirical performance of QuickSort~\cite{hoare1962quicksort}, quickhull~\cite{preparata2012computational}, simplex methods~\cite{borgwardt2012simplex}. In contrast, the use of average-case complexity analysis in optimization scenarios is less mature and in its infancy~\cite{pedregosa2020average, lacotte2020optimal,paquette2021sgd}.

Another approach to move away from the worst-case is to conduct smoothed analysis~\cite{spielman2005smoothed}, in which the complexity is instance-dependent~\cite{fagin2003optimal, afshani2017instance}. Here, instance could be referring to a specific sample, or more broadly to an easily parametrizable subclass of samples. The goal is to build a middle ground between worst-case and average-case analyses, such that we can eventually interpolate between them~\cite{spielman2005smoothed}. This idea has been recently extended to the study of complexity in Q-learning~\cite{khamaru2021instance,pananjady2020instance}, and further analyzed in Markovian linear stochastic approximation setting~\cite{mou2021optimal}. 

Following this line of work, we propose an \emph{instance-dependent analysis} in the general convex setup, extending the linear setting and worst-case optimal rates in~\cite{mou2021optimal}. We define instances by the subclass of functions with non-stationary noise, and make the following contributions:
\begin{itemize}
\setlength{\itemsep}{1pt}
	\item We categorize previous instance complexities into three categories based on their dependence on the iteration-wise noise levels as well as the information available to the algorithm. We further identify the gap between different types of instance complexities.
	\item We propose a new instance dependent rate, named \emph{dynamic error bound},  that dominates  (see Definition~\ref{def:order}) previous ones when additional information of the problem instance is available.
	\item We study an algorithm based on moment estimation that achieves the \emph{dynamic error bound} without knowing additional information when the total variation in noise level is bounded. We hence provide the first theoretical justification for why moment estimation speeds up gradient methods empirically.
\end{itemize}

We test the noise variation and empirical performance of our moment estimation algorithm on policy optimization and neural network training. We conclude by discussing numerous open problems that this work raises.

\section{Problem Setup and Instance Complexity}

\begin{figure}[t]
	\centering
	{
		\includegraphics[width=0.9\columnwidth]{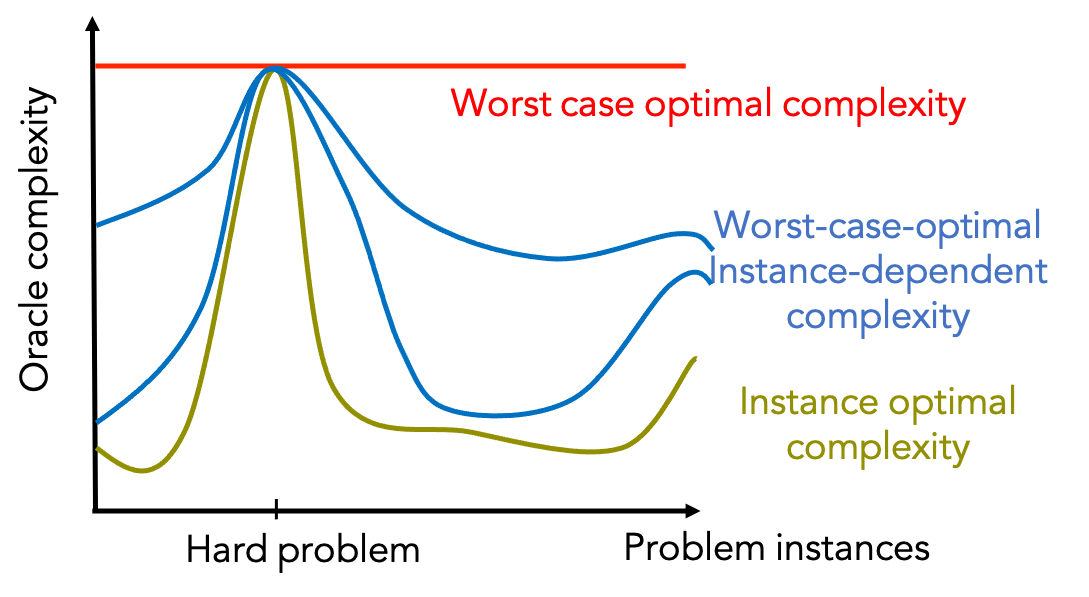}
	} 
	\caption{Different notions of  complexity and optimality.}
	\label{fig:complexity}
\end{figure}

In this work, we focus on stochastic approximation problems within the convex smooth function class $\mathcal{F}_{L, R}$:

\begin{assumption}
	A continuously differentiable function $f$ is in $\mathcal{F}_{L, R}$ if $f$ is convex, $L-$smooth, and bounded below, 
	\begin{align*}
		&f(y) \ge f(x) + \iprod{\grad f(x)}{y-x},\\
		&\|x_1 - x^*\| \le R,\\
		&\|\grad f(x) - \grad f(y)\| \le L\| x - y \|,
	\end{align*}
where $x^*$ denotes a global optimal solution.
\end{assumption}


We follow the classical stochastic approximation setup (\emph{\`a la} Robbins-Monro), where {\bf the noise is additive}. More precisely, the stochastic gradient oracle $g$ is given as the sum of the actual gradient $\nabla f$ and random noise~$\xi$:
\begin{equation}
    g(x) =\nabla f(x) + \xi.
\end{equation}
This setup holds whenever the error comes from an exogenous source. In pursuit of our goal to go beyond worst case analysis, we follow the recent line of work on instance-complexity for stochastic approximation and reinforcement learning~\cite{khamaru2021instance,pananjady2020instance,mou2021optimal}, and define the  $\{ \sigma_k \}_{k \in \N}$-instance as the following subclass of gradient oracles:

\begin{definition}[$\{ \sigma_k \}_{k \in \N}$-instance]
At the $k^{\rm th}$ iteration, the gradient oracle $g_k$ returns a stochastic gradient 
\[ g_k(x) = \nabla f(x) + \xi_k,\] 
where $\xi_k$ is zero mean and has bounded second moment $\sigma_k$:
	$$\E[\xi_k] = 0, \quad  \E[\|\xi_k \|^2] = \sigma_k^2 \le M^2.$$
\end{definition}

For simplicity we assume that the noise is nontrivial:
\begin{assumption}[Nontrivial noise]
\label{as:nontriv}
		$$  \sigma_k^2>  L^2 R^2 / T ,\  \text{for all } k \le T. $$
\end{assumption}
Indeed, when the noise level is too small, $g_k$ is essentially the full gradient $\nabla f$, leading to a deterministic style convergence. Assumption~\ref{as:nontriv} is not necessary but saves us from discussing such trivial cases, which deviate from the main purpose of studying instance-dependent bounds.

Unlike standard analysis where uniform variance bound is imposed on all the stochastic gradients, we allow the noise level to be non-stationary, i.e., can vary from iteration to iteration. To recover the worst case analysis, it suffices to set the noise at a maximum level, $\forall k, \sigma_k =M$. By allowing the noise to be non-stationary, we create a fine-grained middle ground where we can investigate how the variation of noise $\sigma_k$ influence the  convergence rate of an optimization algorithm.

As in most convex problems,  we study the level of suboptimality achieved after querying the oracle $T$ times to upper bound $\E[f(x_T) - \min_x f(x)]$. In the following subsection, we derive \emph{theoretical convergence rates under a $\{\sigma_k\}_{k \in \mathbb{N}}$-instance, named as the instance complexity}.



\subsection{Different levels of instance complexity}
\newcommand{\ra}[1]{\renewcommand{\arraystretch}{#1}}
\begin{table*} [htp!]
	\ra{3}
	\centering
	\resizebox{\textwidth}{!}{
		\begin{tabular}{@{}lcrcrcrcrc@{}}\toprule
			& {Worst} & \phantom{a}& {Agnostic} & \phantom{a} & {Adaptive} &
			\phantom{a} & {Dynamic} &  \phantom{a}  \\ \midrule
			{Error bound} 
			& $\tfrac{2RM}{\sqrt{T}}$  && $ (R^2 + \tfrac{1}{T} \sum_k \sigma_k^2 ) / \sqrt{T}$  && $ 2R  \left(\tfrac{1}{T} \textstyle\sum\limits_{k=1}^T \sigma_k^2 \right)^{1/2} / \sqrt{T} $ && 2R $\left(\tfrac{1}{T} \sum\limits_{k=1}^T \tfrac{1}{\sigma_k}\right)^{-1} / \sqrt{T}$ \\ 
			
			\midrule
			
			{$\eta_k$} 
			& $R / \sqrt{TM^2}$  && $ 1/ \sqrt{T}$  && $ R / \sqrt{\textstyle\sum_{k=1}^{T} \sigma_t^2}$\ or\ $R/ \sqrt{2 \sum_{\tau \le k} \|g_k\|^2}$ && $ R / (\sigma_k\sqrt{T})  $ \\
			
			\midrule
			
			\shortstack{Can be \\ achieved via} 
			& \shortstack{Fixed step, \\ known $R, M$ } && \shortstack{Fixed step, \\ unknown $R, M$ }  && \shortstack{Fixed step, known $R, \{\sigma_k\}_k$ or \\ Adapt. step, unknown $\{\sigma_k\}_k$ } && \shortstack{ Adaptive step,\\ known $R, \{\sigma_k\}_k$ } \\
			\bottomrule
	\end{tabular}}
	\caption{Comparisons of four types of instance-dependent theoretical convergence rates under the noise level $\{\sigma_k\}_k$. The parameter $R^2$ denotes  the initial distance $\| x_1-x^* \|^2$ and the parameter $M$ is an upper bound of $\sigma_k$. The convergence rate improves from the left to the right monotonically, with different choices of stepsizes $\eta_k$. }\label{tab:complexity}

\end{table*}

In this subsection, we categorize rates in stochastic approximation into four levels based on their dependence on the problem instance defined by $\{\sigma_i\}_{i \in \mathbb{N}}$. \textbf{Our proposed categorization later enables us to identify a nontrivial gap between vanilla stochastic gradient descent and its adaptive variant using moment estimation}. Although we focus on analysis for convex functions, similar categorization may also hold for analyses of nonconvex functions.

We begin by summarizing known results that are dependent on iteration-wise noise levels $\{\sigma_k\}_k$. Previous results on instance-dependent bounds usually do not consider the exact same problem setup, and hence not directly comparable. To have a unified framework, we view these results under the smooth-convex setting. Recall the following folklore result:



\begin{theorem}\label{thm:baselines}
When $f \in \mathcal{F}_{L, R}$ (convex, $L$-smooth), the stochastic gradient descent algorithm of the form
	$$x_{k+1} = x_k - \eta_k g(x_k), $$
	with predetermined stepsizes { ${\eta_k \le \frac{1}{L}}$}, 
	satisfies the suboptimality bound
	\begin{equation}\label{sgd convergence}
		\E[f(\overline{x}_T) -f^*] \le \frac{R^2 + \tsum_{k=1}^T \eta_k^2 \sigma_k^2}{ \tsum_{k=1}^T \eta_k},
	\end{equation}
where $\overline{x}_T = (\tsum_{k=1}^T \eta_k x_k) / (\tsum_{k=1}^T \eta_k)$ is the weighted average of iterates.
\end{theorem}

The above theorem already shows that different step size choices can induce a variety of instance complexities, and some choices might give faster convergence. However, better step size choice might require more information about the problem instance. We will now discuss how different step size choice requires different level of information and classify previous work based on their step size strategy.

\paragraph{1. Worst case bounds.} In classical stochastic optimization literature (e.g. ~\cite{Rakhlin2011,Ghadimi2012,Ghadimi2013a, Fang2018}), only an upper bound on the variances is given. In other words, we do not have access to the entire sequence of $\sigma_k$ but only an upper bound  $M \ge \sigma_k, \forall k$. Using the stepsize $ \eta_k = R / \sqrt{TM^2}$, this choice results in the worst case bound:
\begin{align} \label{eq:worst}
		\E[f(\overline{x}_T) -f^*] \le  2RM / \sqrt{T} := \epsilon_{\text{worst}}.
\end{align}

\paragraph{2. Agnostic bounds.} Another type of bound appears in the analysis of adaptive methods, and involves an additive bound on the initialization ($\|x_0 - x^*\|, f(x_0) - f(x^*)$) and noise (see Theorem 2.1 of \cite{Ward2018}, or \cite{Zhou2018c}) instead of a multiplicative one as in other bounds. 

Though these bounds are usually achieved by adaptive methods, to distinguish them from other instance-dependent bounds, we call these bounds  ``\emph{agnostic bounds}'' because they can be obtained using a step size without knowing the maximum noise level, the smoothness constant or the domain diameter of the problem. In smooth convex case, these bounds can be achieved by setting $\eta_k = \frac{1}{\sqrt{T}}$:
\begin{align} \label{eq:agnostic}
	\E[f(\overline{x}_T) -f^*] \le (R^2 + \tfrac{1}{T} \tsum\limits_{k=1}^T \sigma_k^2 ) / \sqrt{T} := \epsilon_{\text{agnostic}}.
\end{align}

\begin{table*}[ht!]
	\centering 
	\ra{1}
	\resizebox{0.7\textwidth}{!}{\begin{tabular}{@{}lcrcrcrcrc@{}}\toprule
			& {Worst} & \phantom{a}& {Agnostic} &
			\phantom{a} & {Adaptive} &  \phantom{a} & {Dynamic} & \phantom{a} \\ \midrule
			{Error bound} 
			& $T^{-\frac{1}{2}}$ &&$T^{-\frac{1}{2}}$    && $T^{-\left ((1 + \alpha ) / 2 \right )}$ && $T^{-\left ((1 + 2\alpha) / 2  \right )}$ \\
			\bottomrule
	\end{tabular}}
	\caption{Comparison of convergence rate under multiple shapes of the noise level $\sigma_k$, where constant terms are omitted. The parameter $\alpha \in[0,\frac{1}{2}]$ is the exponent controlling the ratio between $\max \sigma_k$ and $\min \sigma_k$, indicating the significance in noise changes. }\label{tab:example_theory}
\end{table*}

\paragraph{3. Adaptive bounds.} One theoretically justified advantage of adaptive gradient methods is that if the smoothness and level of suboptimality are known, then adaptive methods can automatically adjust the learning rate for the noise level in stochastic gradients. Results of this type can be found for example in~\cite{ Duchi2011, Levy2018, Ward2018, Zhou2018c}. In particular, the setup in~\cite{Levy2018} (Thm 2.1 and equation (11)) subsumes our smooth-convex setup and is directly comparable. In particular, by setting $\eta_k = R(2 \sum_{\tau \le k} \|g_k\|^2)^{-1/2}$, one gets the following rate: 
\begin{align} \label{eq:adaptive}
	 \E[f(\overline{x}_T) -f^*] \le  \frac{2R}{\sqrt{T}}  \left(\tfrac{1}{T} \textstyle\sum\limits_{k=1}^T \sigma_k^2 \right)^{\frac{1}{2}} := \epsilon_{\text{adaptive}} .
\end{align}
Notice that results in~\cite{mou2021optimal} also fall in this category. Though the bounds in~\cite{mou2021optimal}  may not be directly comparable as they are in the strongly convex regime, these bounds, as well as the adaptive bound in~\eqref{eq:adaptive} can be achieved by using a fixed step size if the noise levels $\sigma_k$ are known in advance by setting $\eta_k = R \left(\textstyle\sum_{\tau=1}^{T} \sigma_\tau^2\right)^{-1/2}$. This is the best attainable rate using a fixed step size by optimizing the error bound in Theorem~\ref{thm:baselines}.

This observation naturally leads to the bounds in the next part, which are achieved by an iteration dependent step size when  $\sigma_k$ are known. From the following result, we will see that the adaptive bounds above are \textbf{worst-case optimal} but \textbf{not instance-level optimal} (see Figure~\ref{fig:complexity}).

\paragraph{4. Dynamic bounds.} 
A natural question is whether we can further improve adaptive bounds by appropriately selecting the stepsizes. This is indeed possible by setting a stepsize inversely proportional to $\sigma_k$.  More concretely, when setting $\eta_k = R / (\sigma_k\sqrt{T})$, we get
\begin{align} \label{eq:dynamic}
	\E[f(\overline{x}_T) -f^*]  \le 2R \left(\tfrac{1}{T} \tsum\limits_{k=1}^T \tfrac{1}{\sigma_k}\right)^{-1} / \sqrt{T} := \epsilon_{\text{dynamic}}.
\end{align}
We name these bounds ``\emph{dynamic bounds}'', analogous to ``dynamic regret'' in online learning~\cite{Jadbabaie2015,yang2016tracking}, indicating that the optimal policy can have iteration dependent action (i.e. iteration-wise step sizes) rather than a fixed action (i.e. a fixed step size).

Such bounds are less studied, as previous works mainly focus on the dependence of $T$ instead of the fine-grained dependency on the instance~ $\{\sigma_k\}_k$. In our setting, it is important to notice that different dependency on $\{\sigma_k\}_k$ could lead to non-trivial differences in the convergence rate. For instance, the dynamic bound depends on the harmonic sum of $\sigma_k$, whereas the adaptive bound depends on its $2$-norm. To make comparison between different error bounds, we adopt the following notation.

\begin{definition} [Dominating bounds]\label{def:order}
	We say an error bound dominates another, denoted as $\epsilon_1 \preceq \epsilon_2$,  if there exists an absolute constant $c > 0$ such that for any $R, \{\sigma_k\}_k, T$, we have that the instantiated value $\epsilon_1(R, T,\{\sigma_k\}_k ) \le c \epsilon_2(R, T,\{\sigma_k\}_k )$. 
\end{definition}

With the above definition, we could provide the following order of instance complexities.

\begin{lemma}\label{lemma:ordering}
	The four different types of bounds satisfy the following ordering:
	\begin{align*}
	  \epsilon_{\text{dynamic}} \preceq \epsilon_{\text{adaptive}}  \preceq   \min\{\epsilon_{\text{worst}} ,	\epsilon_{\text{agnostic}}  \}.
	\end{align*}
\end{lemma}

The above result is expected. We see in Table~\ref{tab:complexity} that the dominance between different bounds is ordered the same way the amount of information available to the algorithm. 

Another straightforward consequence of the lemma is that both $\epsilon_{\text{adaptive}}$ and $\epsilon_{\text{dynamic}}$ are worst case optimal. In other words, they all fall into the same rate when $\sigma_k = M$ is constant. Similarly, when all the ratios $\sigma_i/ \sigma_j$ are bounded by a constant, then the ratio of these rates are bounded, i.e. differ only by a constant ratio.

A more interesting question to ask is when does the adaptive rate $\epsilon_{\text{adaptive}}$ or the dynamic rate $\epsilon_{\text{dynamic}}$  non trivially improve the worst case rate. This can happen when the variation in the noise level $\{\sigma_k\}_k$ is comparable to the total iteration number $T$. In fact,  \textbf{we will later show that gradient methods with moment estimation can achieve such bounds without knowing the noise level in advance, and hence improve over vanilla SGD. } To make the discussion concrete, we first use a synthetic example to illustrate the phenomenon.

\subsection{Complexity comparison via synthetic example}

In this section, we use an example with synthetic noise to illustrate the instance complexity of different types.


\begin{figure} [h]
	\centering
	{
		\includegraphics[width=0.7\columnwidth]{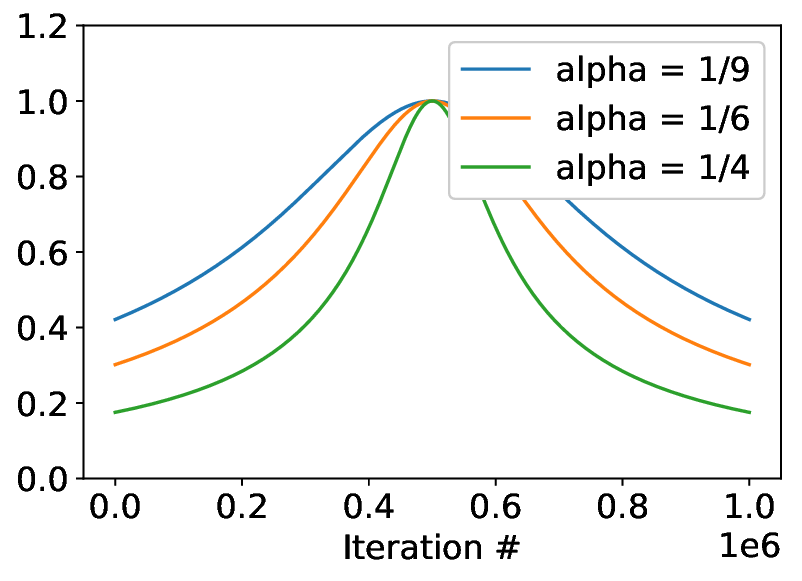}
	}  
	\caption{Mountain shape noise for different values of $\alpha$.}
	\label{fig:example}
\end{figure}

\begin{example}\label{example2} 
	We consider a noise sequence $\sigma_k$ of a mountain shape, illustrated in Figure~\ref{fig:example}.  The noise level increases  smoothly in the first half of iterations, then decreases in the second half, parametrized by a positive constant $\alpha \in (0, 1)$:
	\begin{align*}
		\sigma_k = \tfrac{1}{\sqrt{1+ T^{2\alpha} \left (\frac{2k}{T} - 1 \right )^2}}.
	\end{align*}
\end{example}
In particular, the maximum of $\sigma_k$ is $1$, attained in the middle of the iterations; and the minimum of it is $T^{-\alpha}$. Substituting the values of $\sigma_k$ into the rates summarized in Table~\ref{tab:complexity}, we obtain the rates shown in Table~\ref{tab:example_theory}.

For this specific $\{ \sigma_k\}$-instance, adapting the noise level is beneficial. As we can see, the adaptive bound improves the worst case bound by $T^{\alpha/2}$, and, the dynamic bound improves further the adaptive bound by $T^{\alpha/2}$. Such a non-trivial gap is unobservable under worst case analysis. 


Though dynamic error bounds can achieve faster convergence than standard worst case rates (in terms of $T$), we note that achieving better dependence on $T$ is not the main purpose of our work. \emph{Instead, we want to show via different exponents on $T$ that the gap between different types of instance complexity is significant and cannot be considered as off by an absolute constant.}


We showed that using a step size that is inversely proportional to the noise level can generate a better instance-dependent bound. Such a construction relies on the knowledge of $\sigma_k$. However, in practice, we usually do not have access to the exact noise level. We show in the next section that combining the idea of moment estimation with SGD can achieve the dynamic error bound even when the $\{\sigma_k \}_k$ are unknown.

\section{Moment Estimation Achieves Dynamic Error Bounds}


The main difficulty to achieve the dynamic bounds lies in the estimation of $\sigma_k$. A naive first idea is to draw multiple samples of the gradient at each iteration and perform an empirical estimate of the variance. Later use the estimate $\widehat{\sigma}_k$ to update the stepsizes. In order to ensure nonasymptotic concentration of the estimator, one convenient way is to impose a bounded higher moment assumption. 

\begin{assumption}[Higher moment boundedness.]\label{assump:fourth moment}
	We assume that the fourth moment of $g_k$ is bounded, namely, $\E[\| \xi_k\|^4] = \E[\|g_k -\nabla f \|^4] \le M^4$ for all $k$.
\end{assumption}

This assumption allows us to guarantee the estimation of $\sigma_k^2$ is non-asymptotically concentrated with high probability. In particular, given $N$ samples of the gradients $g_{k,1}, g_{k,2}, \cdots g_{k,N}$ at the $k$-th iteration, the standard variance estimator  
\begin{align}\label{eq:ub-var-estimator}
	Y_k  &= \tfrac{\sum_{i=1}^N \|g_{k,i} - \overline{g}_k \|^2 }{N-1},  
	 \text{ where }  \overline{g}_k = \tfrac{1}{N} \textstyle\sum_{i=1}^N g_{k,i},
\end{align}
is an unbiased, i.e. $\E[Y_k] = \sigma_k^2$. Moreover, its variance is  bounded: $\Var[Y_k^2]  =  \tfrac{1}{N} \left ( \E[\|\xi_k\|^4] - \tfrac{N-3}{N-1} \sigma_k^4 \right ) \le \tfrac{M^4}{N}.$

In other words, with high probability, the empirical estimator $Y_k$ concentrates around $\sigma_k^2 \pm \tfrac{M^2}{\sqrt{N}}$. However, if $\sigma_k = O(T^{-\alpha})$, we would need $T^{4\alpha}$ samples to make sure the approximation is in the same magnitude of $\sigma_k$. This leads to an extremely inefficient sample complexity, barely implementable in practice. 


In the real-world setting, often the environment changes in a smooth way. This provides another possibility if we are able to exploit the continuity in the noise changes. In particular, we can combine the previous gradients with the actual ones to perform the variance estimation, i.e. conducting a moving average estimator. In order to provide theoretical guarantee, we impose the following condition on the total variation of noise level~$\sigma_k^2$.

\begin{assumption}[Total variation boundedness.]\label{assump:sum}
	We assume that the total variation of $\sigma_k^2$ is bounded, i.e., $\sum\nolimits_k |\sigma_k^2 - \sigma_{k+1}^2| \le D^2$,
	where $D$ is a constant independent of $T$. To facilitate discussion, we assume $D^2 = \Omega(M^2)$.
\end{assumption}

The bounded total variation assumption provides us the necessary smoothness on $\sigma_k$. This assumption is commonly used in the dynamic, online learning literature \citep{Besbes2014, Jadbabaie2015, Mokhtari2016}. A key consequence of this assumption is that it  avoids infinite oscillations, such as the pathological setting where $\sigma_{2k}=\frac{1}{T^\alpha}$ and $\sigma_{2k+1}=1$, in which case the total variation scales with the number of iterations~$T$. The specific constant in $D^2 = \Omega(M^2)$ depends on the shape of the noise. When $\sigma_k$ is increasing in the first half and decreasing in the second half, as in Example~\ref{example2}, the total variation is bounded by $D^2 \le 2M^2$. More generally, if the noise can be decomposed into $K$ piece-wise monotone fragments, then the bound $D^2 \le KM^2$ holds.

\begin{algorithm}[tbp]
	\caption{Moment Estimation SGD ($x_{1}, T,  c, m $)}\label{algo:sigma}	    
	\begin{algorithmic}[1]
		\STATE Initialize $\hat{\sigma}_1 = \frac{\|g_{1,1} - g_{1,2}\|^2}{2}$, where $g_{1,1}, g_{1,2}$ are two independent stochastic gradients at $x_1$. 
		\FOR{$t = 1, 2, ..., T / 2$}
		\STATE Query two  stochastic gradients $g_{t,1}, g_{t,2}$ at $x_t$.
		\STATE Update 
		\begin{align*}
			&  \overline{g_t} = \frac{g_{t,1}+g_{t,2}}{2} \text{ and } \eta_{t}= \tfrac{c}{\hat{\sigma}_t + m}  \\
			&x_{t+1} = x_{t} - \eta_{t} \overline{g_t} \quad  \\
			&\hat{\sigma}_{t+1}^2 = \beta \hat{\sigma}_{t}^2 + (1-\beta) \frac{\|g_{t,1} - g_{t,2}\|^2}{2}
		\end{align*} 
		
		\ENDFOR
		\STATE \textbf{Return} $x_I$ where $I$ is the random variable such that $\mathbb{P}(I = i) \propto \eta_i$.
	\end{algorithmic}
	
\end{algorithm}

\subsection{Design and analysis of our proposed  algorithm}

\begin{table*}[ht]
	\centering 
	\ra{1}
	\resizebox{0.7\textwidth}{!}{\begin{tabular}{@{}lcrcrcrcrc@{}}\toprule
			& {Worst} & \phantom{a} & {Adaptive} &  \phantom{a} & {Dynamic} & \phantom{a} & {Moment Est.} &
			\phantom{a} \\ \midrule
			{$\alpha = 1/9$} 
			& $T^{-\frac{1}{2}}$  && $T^{- 5 / 9}$ && $T^{- 11   / 18  }$  &&$T^{- 11 / 18}$   \\
			{$\alpha = 1/8$} 
			& $T^{-\frac{1}{2}}$  && $T^{- 9 / 16}$ && $T^{- 11   / 18  }$  &&$T^{- 5 / 8}$   \\
			\bottomrule
	\end{tabular}}
	\caption{Comparison of convergence rate under the mountain shape noise $\sigma_k$, where constant terms are omitted. The parameter $\alpha  = 1/9$ is the exponent controlling the ratio between $\max \sigma_k$ and $\min \sigma_k$, indicating the significance in noise changes. }\label{tab:example_rate}
\end{table*}

With the bounded total variation in hand, we are now ready to exploit the continuity of $\sigma_k$. Without loss of generality, we use the empirical variance estimator \eqref{eq:ub-var-estimator} with two samples per iteration. We construct an online estimator using an exponential moving average:
\begin{align}\label{eq:exp-est}
	\hat{\sigma}_{t+1}^2 = \beta \hat{\sigma}_{t}^2 + (1-\beta) \frac{\|g_{t,1} - g_{t,2}\|^2}{2}, \tag{ExpMvAvg}
\end{align}
where $g_{t,1}$ and $g_{t,2}$ are two samples of the gradient at $t$-th iteration. Note that \emph{$t$ is the index for algorithm iteration, whereas $k$ is the index for number of oracle calls}. The oracle call at iteration $t$ corresponding to the $(2t)_{th}$ and $(2t+1)_{th}$ call to the stochastic oracle. 

The moving average is very similar to the one used in the modern adaptive methods such as RMSProp~\citep{Tieleman2012}, Adam~\citep{Kingma2014}, etc. One major difference compared to the current adaptive methods is that we are estimating the variance of gradients, whereas RMSProp/Adam directly accumulate the gradient square in a coordinate-wise manner.

The above estimator combined with dynamic stepsizes lead to our design of Algorithm~\ref{algo:sigma}. To avoid the explosion of stepsize when $\hat{\sigma}_t$ is underestimated, we include a constant correction term $m$ in the denominator of $\eta_t$. Such correction term is also~commonly used in the practical implementation of adaptive methods~\cite{Duchi2011, Kingma2014}. In reinforcement learning, this term is sometimes referred to as the  exploration bonus~\citep{Strehl2008, Azar2017}. 

Algorithm~\ref{algo:sigma} gives us the following convergence guarantee.
\begin{theorem}[Main result] \label{thm:convex}
	Under  Assumptions~\ref{assump:fourth moment},\ref{assump:sum}, 
	with probability at least $1/2$, the iterates generated by Algorithm~\ref{algo:sigma} using parameters $\beta = 1 - 2T^{-2/3}$, $m = 4\sqrt{D^2+M^2} T^{-\frac{1}{9}} \ln(T)^{\frac{1}{2}}$, $c= \tfrac{R}{\sqrt{T}}$ satisfy
	\vspace{-0.1cm}
	\[f(\overline{x}_T) -f^* \le \tfrac{64 R}{\sqrt{T}} \cdot \rbr*{\tfrac{1}{T}\tsum_{k=1}^T \frac{1}{\sigma_k+m}}^{-1} := \epsilon_{ours}.\]
\end{theorem}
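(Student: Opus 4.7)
The plan is to start from the standard SGD descent inequality. Writing $\eta_k = c/(\hat m_k+m)$ and noting that $\eta_k$ is predictable with respect to the filtration generated by $g_1,\dots,g_{k-1}$, the analysis underlying Theorem~\ref{thm:baselines} carries over and yields
\[
\E \sum_{k=1}^T \eta_k (f(x_k)-f^*) \;\le\; \tfrac{R^2}{2} + \tfrac{c^2}{2}\, \E \sum_{k=1}^T \tfrac{m_k^2}{(\hat m_k+m)^2}.
\]
Dividing by $\sum_k \eta_k = c\sum_k 1/(\hat m_k+m)$ and applying convexity of $f$, the claimed rate reduces to two inequalities holding up to absolute constants: (i) $\sum_k m_k^2/(\hat m_k+m)^2 \lesssim T$, and (ii) $\sum_k 1/(\hat m_k+m) \gtrsim \sum_k 1/(m_k+m)$. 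Both would follow if $\hat m_k+m$ lay within a constant multiplicative factor of $m_k+m$ on ``most'' iterations.

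The technical heart is thus to control how well $\hat m_k^2$ tracks $m_k^2$, which I would do via a bias-variance decomposition. Setting $b_k := \E[\hat m_k^2] - m_k^2$ and using $\E[\hat m_{k+1}^2] = \beta\,\E[\hat m_k^2] + (1-\beta)m_k^2$ gives the recursion $b_{k+1} = \beta b_k + (m_k^2 - m_{k+1}^2)$ with $b_1=0$. Telescoping and invoking Assumption~\ref{assump:sum}(b) yields $\sum_k |b_k| = \cO(D^2/(1-\beta)) = \cO(M^2 T^{2/3})$. For the mean-zero fluctuation, unrolling the recursion expresses $\hat m_k^2 - \E[\hat m_k^2]$ as a weighted martingale sum of the increments $\|g_j\|^2 - m_j^2$, whose conditional second moment is bounded by $M^4$ via Assumption~\ref{assump:sum}(a); this gives $\Var[\hat m_k^2] = \cO((1-\beta)M^4) = \cO(M^4 T^{-2/3})$. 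A Chebyshev-type argument then bounds the expected number of ``bad'' iterations $\{k : |\hat m_k - m_k| > m/2\}$.

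On the complementary good set, $\hat m_k + m \in [\tfrac12(m_k+m),\,2(m_k+m)]$, immediately yielding (i) and (ii) up to constants; the bad-set contribution to (i) is absorbed via the floor $\hat m_k + m \ge m$, and the corresponding loss in (ii) is small because the Chebyshev count of bad indices is much smaller than $T$ for the prescribed parameters. Putting the pieces together produces $\E[f(\overline x_T)-f^*] \le \tfrac{C R}{\sqrt T}\cdot \tfrac{T}{\sum_k 1/(m_k+m)}$ for an explicit $C$, and a final application of Markov's inequality promotes this to the claimed statement with probability $\ge 1/2$.

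The main obstacle is the bias-variance-safety balancing in the middle step. Because Assumption~\ref{assump:sum}(b) is only a \emph{global} total-variation bound, a single large jump in $m_k^2$ can create a pointwise bias of order $\Theta(M^2)$, so uniform pointwise control of $|\hat m_k - m_k|$ is unattainable; the argument must proceed in aggregate and must show that the few high-bias indices contribute only bounded amounts to the sums in (i) and (ii). The specific exponents $1-\beta = 2T^{-2/3}$ and $m = 2MT^{-1/9}\log(T)^{1/3}$ are the balance that makes the summed bias, the Chebyshev count of high-fluctuation indices, and the safety contribution $Tm^2$ of comparable order, which is exactly the regime the proof requires.
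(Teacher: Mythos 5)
Your plan matches the paper's at the structural level: start from the SGD descent inequality, control the step-size error through a bias--variance decomposition of $\hat m_k^2$ (the recursion $b_{k+1}=\beta b_k+(m_k^2-m_{k+1}^2)$ with $\sum_k|b_k|=\cO(D^2/(1-\beta))$ and $\Var[\hat m_k^2]=\cO((1-\beta)M^4)$ is exactly the paper's Lemma~\ref{thm:online}), balance exponents via $1-\beta\sim T^{-2/3}$ and $m\sim MT^{-1/9}$, and finish with Markov to obtain the probability-$1/2$ guarantee. Where you genuinely diverge is the middle step that converts the aggregate estimation error into bounds on $\sum_k m_k^2/(\hat m_k+m)^2$ and $\sum_k 1/(\hat m_k+m)$. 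You propose a good/bad partition (Chebyshev/Markov count of indices with $|\hat m_k-m_k|>m/2$, floor $\hat m_k+m\ge m$ on the bad set). The paper never partitions: for the upper bound it adds and subtracts $\hat m_k^2$ so that $\tfrac{m_k^2}{(\hat m_k+m)^2}\le\tfrac{|m_k^2-\hat m_k^2|}{m^2}+1$, and for the lower bound it writes $\tfrac{1}{\hat m_k+m}\ge\tfrac{1}{m_k+m}-\tfrac{(\hat m_k-m_k)_+}{(\hat m_k+m)(m_k+m)}$ and then applies the AM--GM step $\tfrac{(\hat m_k-m_k)_+}{m^{3/2}\sqrt{m_k+m}}\le\tfrac12\bigl(\tfrac{(\hat m_k-m_k)_+^2}{m^3}+\tfrac{1}{m_k+m}\bigr)$, which lets $\tfrac12\sum_k\tfrac{1}{m_k+m}$ be absorbed and reduces everything to the already-controlled aggregate $\sum_k(\hat m_k-m_k)_+^2$. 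Your route is workable, but it carries a delicate constant: the bad-set loss in the lower bound is $\sum_{k\in B}\tfrac{1}{m_k+m}\le|B|/m$, which must be a bounded-away-from-one fraction of $\sum_k\tfrac{1}{m_k+m}\ge T/(M+m)$, i.e.\ $|B|\lesssim Tm/M\sim T^{8/9}(\ln T)^{1/3}$; the Markov count gives $|B|\lesssim (D^2+M^2)T^{2/3}\ln T/m^2\sim T^{8/9}(\ln T)^{1/3}$ — the \emph{same order}, not ``much smaller than $T$'' as you write. So the argument only closes after retuning the constant in $m$ (or the threshold defining $B$), whereas the paper's algebraic route closes without that sensitivity. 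That is the real advantage of avoiding the partition.
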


\begin{corollary}
	Our result directly implies a $1 - \delta $ high probability convergence rate, by restarting it $2\log(1/\delta)$ times. An additional $\log(1/\delta)$ dependency will be introduced in the complexity, as in standard high probability results \cite{Nemirovski2009, Jin2017b, Fang2018a}.
\end{corollary}

The main challenge in proving the theorem is to effectively bound the estimation error $|\hat{\sigma}^2_t - \sigma^2_{2t}/2 - \sigma^2_{2t+1} / 2|$. As $\hat{\sigma}^2_t$ is an exponential average, the past errors accumulates into the current estimator, which requires a careful bound using concentration and the total variation of $\sigma_k$. In particular, the decay parameter~$\beta$ plays a critical role, determining the contribution of past gradients in the current estimator. A consequence of using past gradients in the estimate is that the online estimator $\hat{\sigma}_k$ is no longer unbiased. The proposed choice of $\beta$ and $m$ carefully balances the bias error and the variance error, leading to a sublinear regret, see~Appendix~\ref{appendix:keylemma}. 


\paragraph{Understanding the convergence rate} Due to the correction constant $m$, the obtained convergence rate inversely depends on $\sum_{k=1}^T \frac{1}{\sigma_k+m}$, instead of the $\sum_{k=1}^T \frac{1}{\sigma_k}$ dependency in the dynamic bound. This additional term makes the comparison less straightforward, especially when some of the $\sigma_k$ are small. We provide several scenarios to facilitate the comparison. 

\begin{corollary}\label{cor:ideal baseline}
	If the ratio $M/(\min_k \sigma_k) \le  T^{\frac{1}{9}}$, then the Moment Estimation SGD method converges in the same order as the dynamic error bound~$\epsilon_{dynamic}$.
\end{corollary}

This result is remarkable since our proposed method does not require any knowledge of $\sigma_k$ values, and yet it achieves the dynamic rate. In other words, the exponential moving average estimator successfully adapts to the variation in~noise, allowing faster convergence than adaptive/worst bounds.  In particular when taking $\alpha = 1/9$ in Example~\ref{example2}, we get the following error bounds in Table~\ref{tab:example_rate}. The comparison between different error bounds suggest that moment estimation can non-trivially improve the convergence rates achieved by conventional analysis for adaptive step sizes.

\begin{corollary}\label{cor:std baseline}
Let $\sigma_{avg}^2 = \tsum \sigma_k^2/T$ be the average second moment. If $M/ \sigma_{avg}\le~T^{\frac{1}{9}}$, then adaptive method is no slower than  the adaptive error bound \eqref{eq:adaptive}. 
\end{corollary}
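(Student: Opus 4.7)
The plan is to start from the adaptive bound in Theorem~\ref{thm:convex}, namely
\[
f(\overline{x}_T) - f^* \;\le\; \frac{2R}{\sqrt{T}} \cdot \frac{4T}{\sum_{k=1}^T \tfrac{1}{m_k + m}},
\]
with correction constant $m = 2M\,T^{-1/9}\ln(T)^{1/3}$, and compare it with the \eqref{constant baseline} rate $\tfrac{2R}{\sqrt{T}}\cdot m_{avg}$. The entire task reduces to showing that the harmonic-mean factor $H := T / \sum_k \tfrac{1}{m_k+m}$ is at most $m_{avg}$ up to logarithmic factors.

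First I would apply the AM--HM inequality to the positive numbers $m_k + m$:
\[
H \;=\; \frac{T}{\sum_{k=1}^T \tfrac{1}{m_k+m}} \;\le\; \frac{1}{T}\sum_{k=1}^T (m_k + m) \;=\; \bar m + m,
\qquad \bar m \defeq \frac{1}{T}\sum_{k=1}^T m_k.
\]
Next I would bound $\bar m$ by $m_{avg}$ using Jensen's (equivalently, Cauchy--Schwarz): since the square is convex, $\bar m \le \sqrt{\tfrac{1}{T}\sum_k m_k^2} = m_{avg}$. So $H \le m_{avg} + m$, and it remains to control $m$.

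For the correction term, I would plug in the hypothesis $M \le m_{avg}\,T^{1/9}$ into the formula $m = 2M T^{-1/9}\ln(T)^{1/3}$, obtaining
\[
m \;\le\; 2\,m_{avg}\,T^{1/9}\cdot T^{-1/9}\,\ln(T)^{1/3} \;=\; 2\,m_{avg}\,\ln(T)^{1/3}.
\]
Substituting back gives $H \le m_{avg}\bigl(1 + 2\ln(T)^{1/3}\bigr) = \tilde{\cO}(m_{avg})$, and therefore
\[
f(\overline{x}_T) - f^* \;\le\; \frac{8R}{\sqrt T}\bigl(m_{avg} + 2\,m_{avg}\ln(T)^{1/3}\bigr) \;=\; \tilde{\cO}\!\left(\frac{R\,m_{avg}}{\sqrt T}\right),
\]
which matches the \eqref{constant baseline} up to a $\polylog(T)$ factor.

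There is no real obstacle here: the result is essentially a two-line consequence of AM--HM followed by Jensen, the only subtlety being whether the correction $m$ swamps $m_{avg}$, which is precisely what the hypothesis $M/m_{avg}\le T^{1/9}$ rules out, up to the $\ln(T)^{1/3}$ factor built into the choice of $m$. If one wanted to sharpen the corollary to remove the $\ln(T)^{1/3}$ factor, the only avenue would be to revisit the choice of $m$ in Theorem~\ref{thm:convex}; this would, however, change the statement of that theorem, and so is outside the scope of the corollary.
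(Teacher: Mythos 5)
Your proof is correct and is the natural (and essentially forced) argument: AM--HM reduces the adaptive factor to $\bar m + m$, Cauchy--Schwarz gives $\bar m \le m_{avg}$, and the hypothesis $M/m_{avg}\le T^{1/9}$ ensures the correction term $m = 2M T^{-1/9}\ln(T)^{1/3}$ is $\tilde{\cO}(m_{avg})$. The paper states this corollary without a written proof, and your two-line derivation is what the authors are implicitly relying on; no gap.
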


The condition in Corollary~\ref{cor:std baseline} is strictly weaker than the condition in  Corollary~\ref{cor:ideal baseline}, which means even though an adaptive method may not match the dynamic bound, it can still be non-trivially better than the adaptive bound. This case happens for instance when $\alpha > \frac{1}{9}$ in Table~\ref{tab:example_rate}, where the proposed method is $\cO(T^{\frac{1}{9}})$ faster than the adaptive bound. Indeed,  $\cO(T^{\frac{1}{9}})$ is the maximum improvement one can expect according to our current analysis. 

We now proceed to evaluate different algorithms and bounds  with both synthetic and deep learning experiments.

\begin{figure*}[htb]
	\centering
	
	\begin{minipage}{.3\linewidth}
		\includegraphics[width=\linewidth]{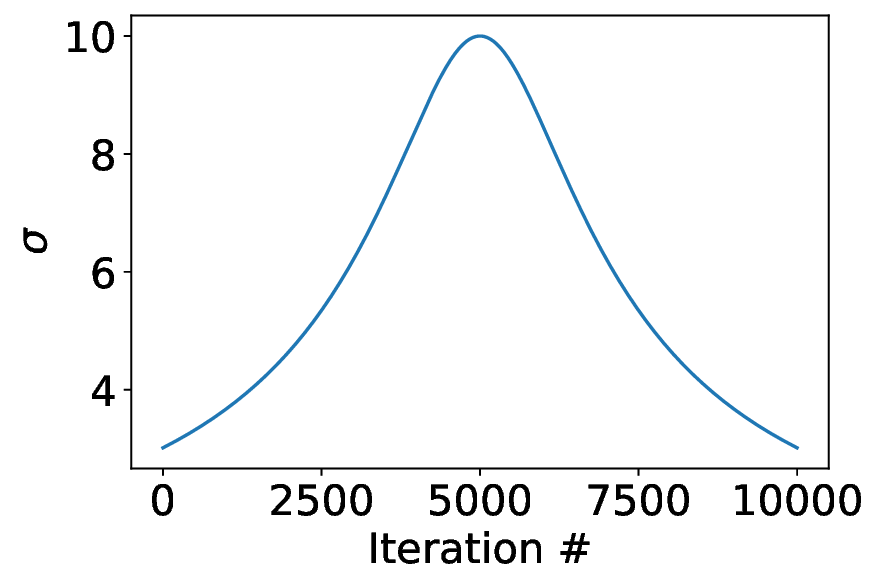}
	\end{minipage}
	\begin{minipage}{.33\linewidth}
		\includegraphics[width=\linewidth]{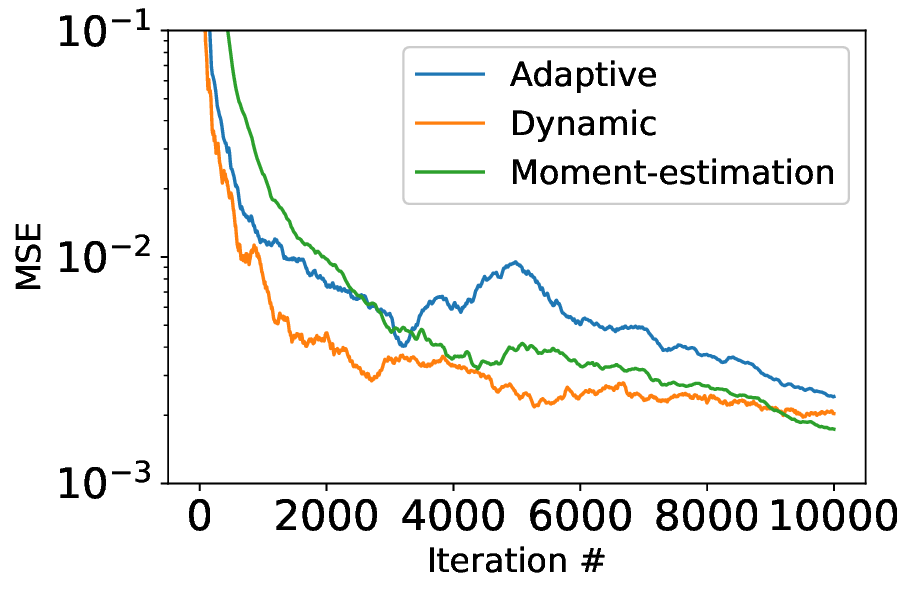}
	\end{minipage}
	\begin{minipage}{.3\linewidth}
		\includegraphics[width=\linewidth]{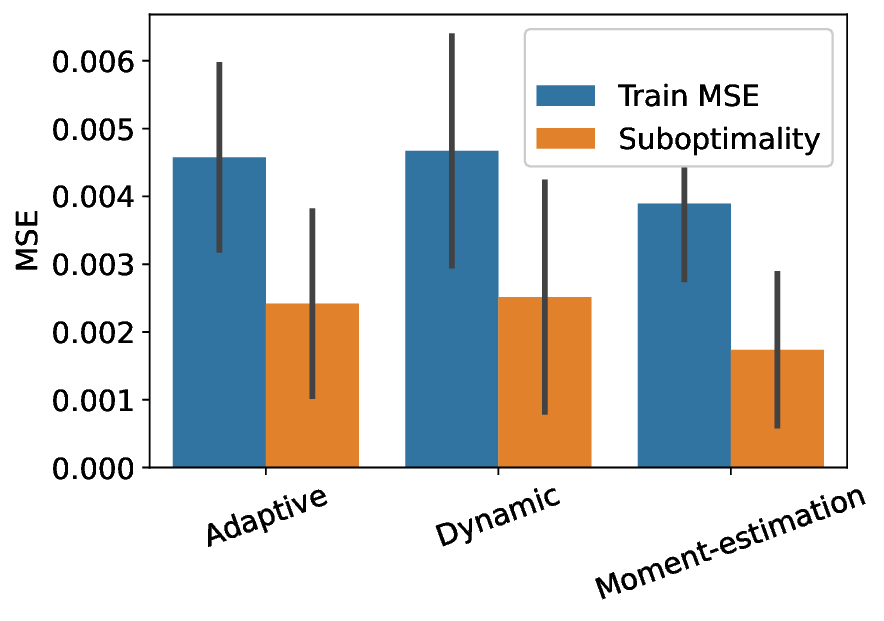}
	\end{minipage}
	\caption{Results for synthetic least square problems.  The three plots from left to right corresponds to the shape of noise, the average MSE out of 10 random runs and the numerical values of suboptimality.}
	\label{fig:synth}
\end{figure*}

\begin{figure*}[htb]
	\centering
	\begin{minipage}{.24\linewidth}
		\includegraphics[width=\linewidth]{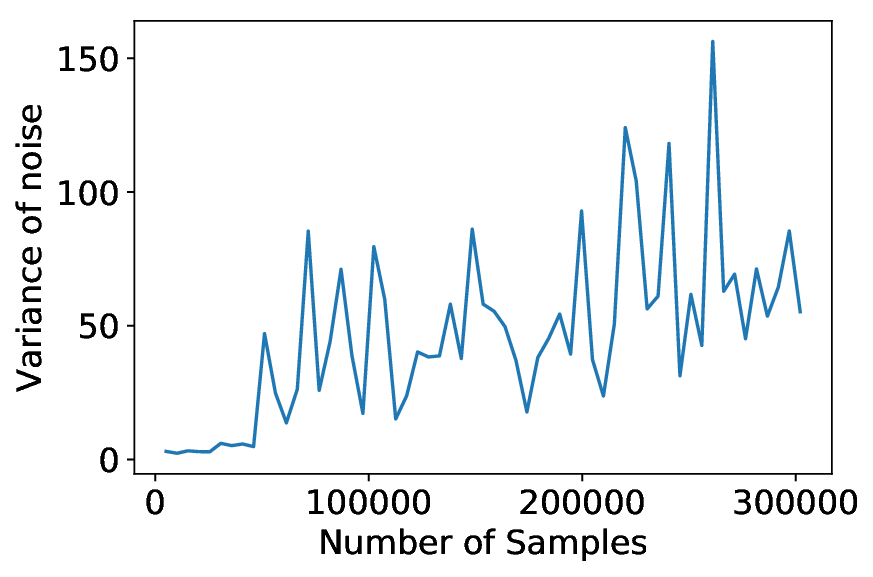}
	\end{minipage}
	\begin{minipage}{.24\linewidth}
		\includegraphics[width=\linewidth]{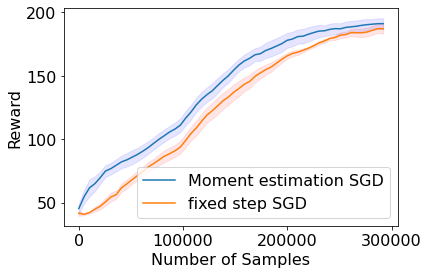}
	\end{minipage}
	\begin{minipage}{.24\linewidth}
		\includegraphics[width=\linewidth]{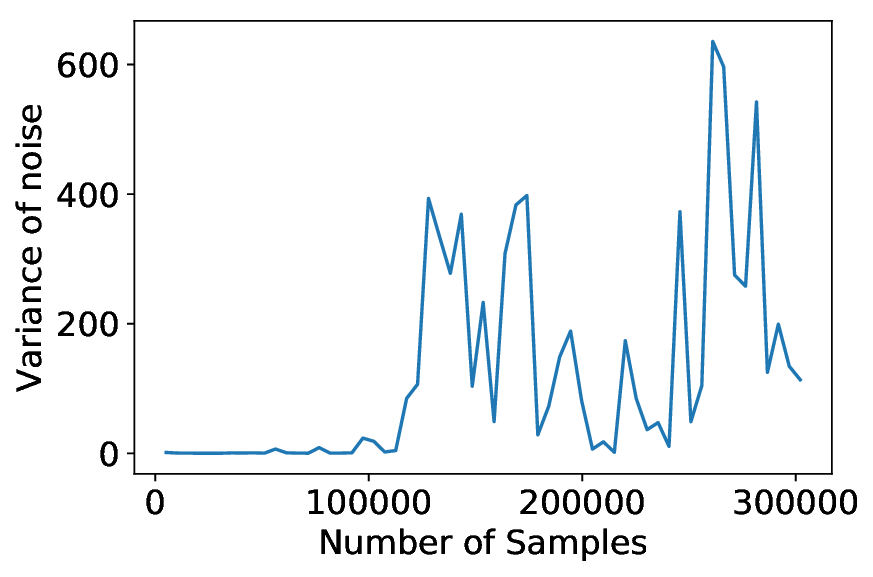}
	\end{minipage}
	\begin{minipage}{.24\linewidth}
		\includegraphics[width=\linewidth]{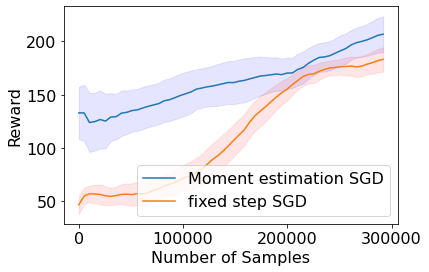}
	\end{minipage}
	\caption{The left two figures plot the noise level and the average reward during training a controller for the walker task. The right two plots correspond to the cartpole swing-up task.}
	\label{fig:rl}
\end{figure*}

\section{Experiments}

In this section, we present three sets of experiments: synthetic least squares, policy optimization for mujoco tasks and neural network training on Cifar10 dataset. Our synthetic experiments verify the theory prediction, whereas our deep learning experiments show that the studied moment-estimation algorithm is practical and performs as well as common baselines.

\begin{figure*}[htb!]
	\centering
	\begin{minipage}{.27\linewidth}
		\includegraphics[width=\linewidth]{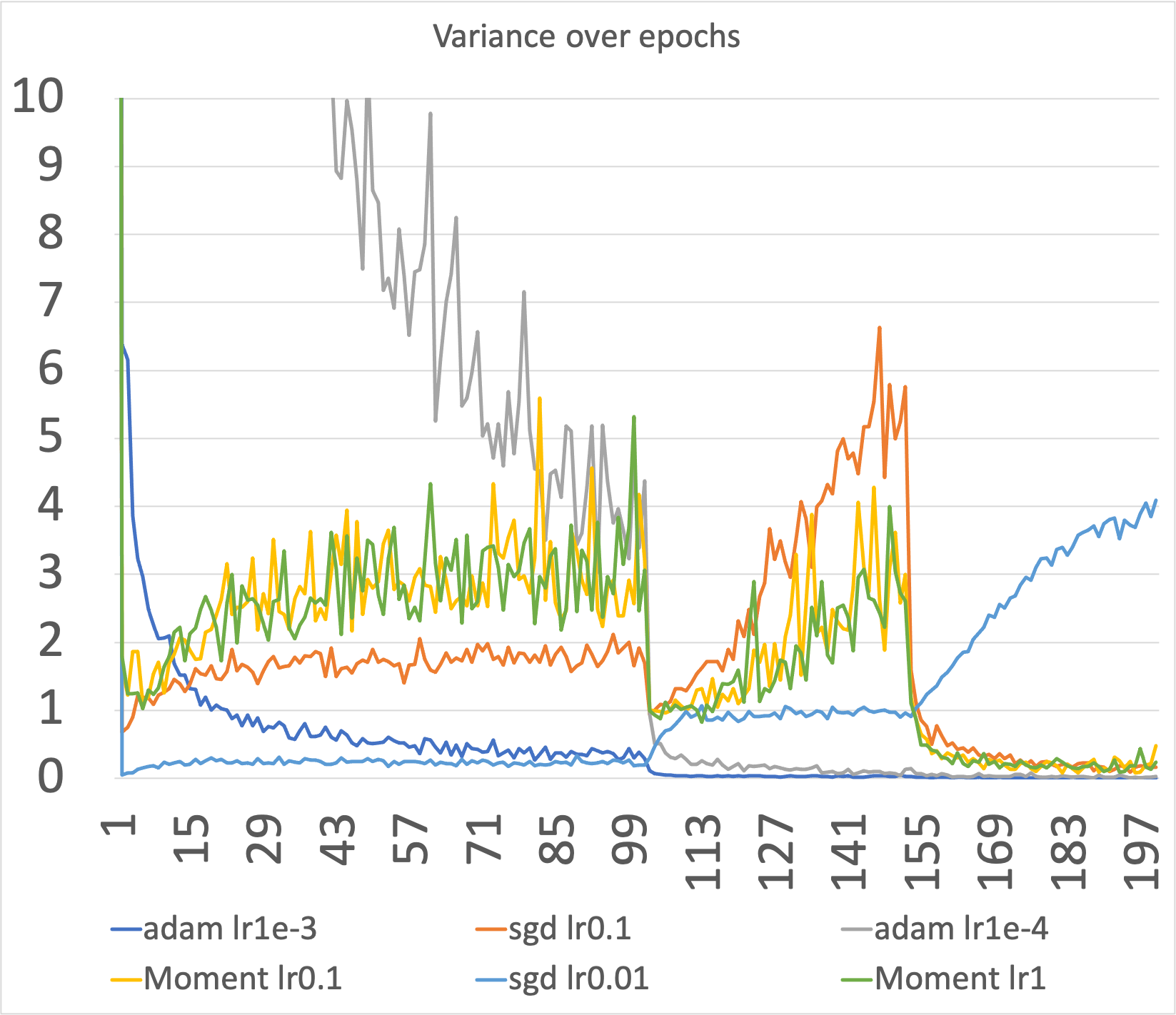}
	\end{minipage}
	\begin{minipage}{.27\linewidth}
		\includegraphics[width=\linewidth]{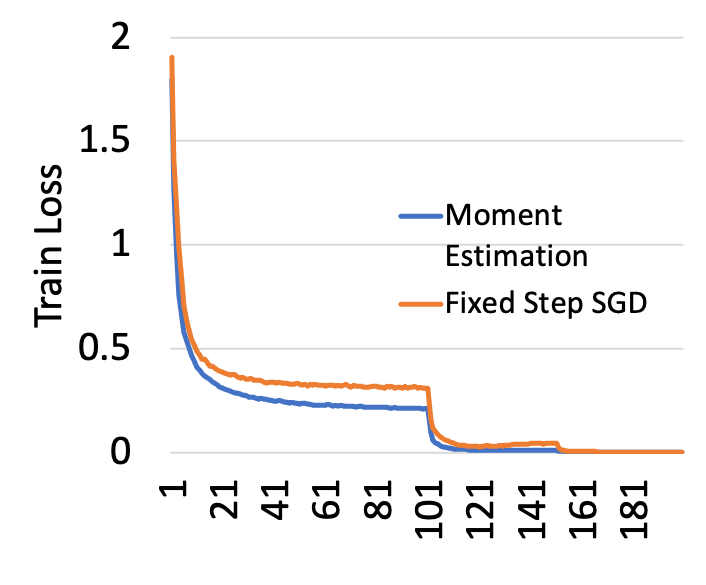}
	\end{minipage}
	\begin{minipage}{.29\linewidth}
		\includegraphics[width=\linewidth]{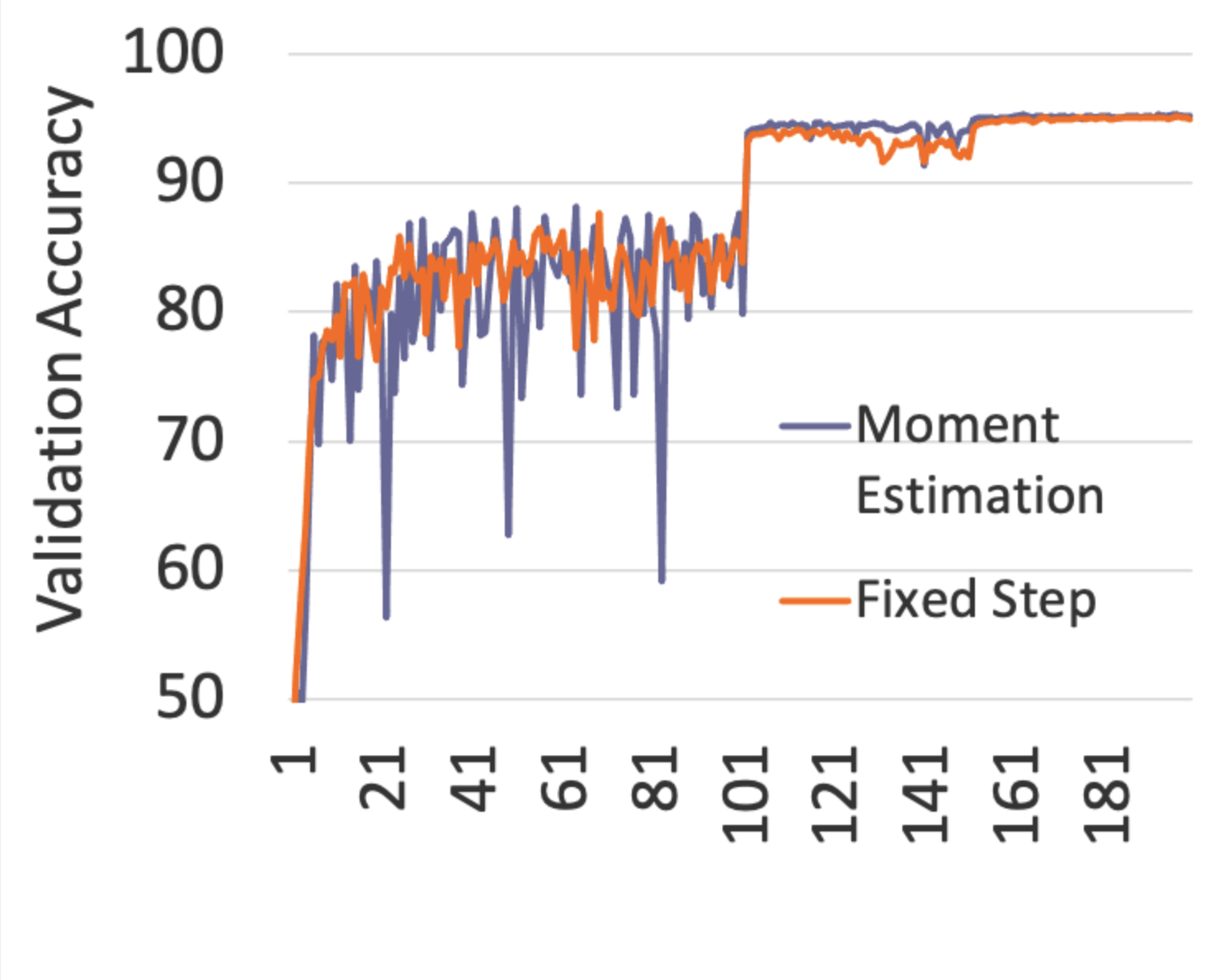}
	\end{minipage}
	\caption{The left figure plots the noise level while training a ResNet model on Cifar10 for 200 epochs. It shows that noise levels are very much algorithm dependent. The second left figure shows that the variance in stochastic gradient almost equals the variance. The right two figures show that in Cifar10 training, the moment estimation algorithm performs as well as the fine-tuned SGD baseline.}\label{fig:cifar}
\end{figure*}

\subsection{Synthetic experiments}
In the synthetic experiment, we generate a random linear regression dataset using the {\tt sklearn} library. We design the stochastic oracle as full gradient with injected Gaussian noise, whose coordinate-wise standard deviation $\sigma$ is shown in the top row figures of Fig~\ref{fig:synth}. We then run the three algorithms discussed in this work: the fixed best step-size, the dynamic step size and the moment estimation algorithm in Alg~\ref{algo:sigma}. We fine-tune the step sizes for each algorithm by grid-searching. We repeat the experiment for 10 runs and show the average training trajectory in the second row in Fig~\ref{fig:synth}.  From Figure~\ref{fig:synth}, we see that moment estimation algorithm can achieve comparable performance against the dynamic version and outperforms the fixed step size SGD.

\subsection{Policy optimization}
In this set of experiments, we simulated the proximal policy optimization algorithm in the nonstationary environment described in \cite{DulacArnold2019}. We consider two Mujoco tasks: one in which a walker gets reward for walking; another in which a cart-pole gets reward for swinging up the pole. The original implementation\footnote{\url{https://github.com/google-research/realworldrl_suite}} update the parameters with ADAM~\cite{Kingma2014} and gradient clipping, which as we will show later, closely connects to noise-dependent step size. To highlight the comparison between fixed step size SA and adaptive step size SA, we replaced the update with the constant baseline and the moment estimation algorithm. 

During training, we estimate the variance level by sampling a batch of 5 thousand state, action pairs. For each algorithm, we fine-tune the step sizes by grid-searching among $10^k,$ where $k$ is an integer. The estimated variance of noise and average reward over 5 runs are plotted in Figure~\ref{fig:rl}. The result shows that the noise level is indeed nonstationary during training, where the SA algorithm can benefit from an adaptive step size. We note that the adaptive step size method learns much faster than fixed stepsize in the cartpole task over the first few iterations. The advantage might result from the low noise level in initial epochs of the policy gradient due to the simplicity of swinging up a pole.

\subsection{Neural network training}
We will now discuss the application of our proposed method to neural network training. In order to apply our result in such settings, the challenge  is not merley about  extending the results to nonsmooth scenarios or non-convex cases. Rather, the main challenge we face is that noise levels in stochastic optimization for  neural network training are step-size dependent. The source of nonstationary noise in this setting is entirely \emph{endogenous}, i.e., it is determined by the iterative output $x_t$. In such settings, it is unclear how baselines could be defined, or improvement could be quantified. 

To this end, we show in Figure~\ref{fig:cifar} that the noise trajectories are different for training ResNet18 on Cifar10 with different algorithms and hyperparameters. Even though our theoretical guarantees do not directly apply to this setting, we can apply our adaptive step-size algorithm and still exploit the variations in noise. 

Directly applying Algorithm~\ref{algo:sigma} may be cumbersome on standard image classification pipelines due to requiring an unbiased within-batch variance estimator. Instead, we notice from Figure~\ref{fig:cifar} that the noise level dominates the gradient norm in neural network training, and hence we can simply use the empirical second moment $\E[\|g_k\|^2]$ as a substitute for the empirical variance $\E[\|g_k - \nabla f(x_k) \|^2]$. This gives us the following update,
\begin{align*}
	x_{k+1} = x_{k} - \eta_{k} g_k \quad \text{ with } \quad \eta_{t}= \tfrac{c}{\hat{m}_k + m}, \\
\text{ and }	\hat{m}_{k+1}^2 = \beta \hat{m}_{k}^2 + (1-\beta) \frac{\|g_{k}\|^2}{2}.
\end{align*} 
We should point out that interestingly, the above update is {\it exactly the same} as in the RMSProp algorithm~\cite{Tieleman2012} if the step sizes were coordinate-wise. Since the moment estimation algorithm is very similar to popular optimizers for training Cifar10, we do not expect it to significantly outperform the well tuned baselines. Instead, we highlight that our analysis provides theoretical evidence for the popularity of moment estimation techniques in practice.

Such observations point to the following interesting question: ``\emph{why is fixed stepsize SGD minimax optimal~\citep{Arjevani2019}, yet adaptive methods such as RMSProp and ADAM outperforms fixed step size SGD in many real world settings?}" Alongside with many recent works~\citep{Reddi2019,Wilson2017,Zhang2019, Ward2018, luo2019adaptive, liu2020adam}, we believe that our more fine-grained analysis provides a new perspective and motivates new avenues for proving the effectiveness of adaptive algorithms such as ADAM and RMSProp.

\section{Conclusions and Discussions}\label{sec: discussion}

In this work, we provided a new perspective for instance-dependent complexity of stochastic approximation methods. We first categorized existing instance-dependent error bounds into different levels based on dominance relations. We then proposed a new dynamic error bound that dominates known ones. Simple algorithms that achieves this bound requires knowing the exact noise levels and is not implementable. To address this issue, we showed that when noise levels have bounded total variation,  moment estimation can achieve the desired rate. Our results are validated by both synthetic and real-world experiments. We believe the instance complexity we developed shed new insights to the following interesting question: ``\emph{why is fixed stepsize SGD minimax optimal~\citep{Arjevani2019}, yet adaptive methods such as RMSProp and ADAM outperforms fixed step size SGD in many real world settings?}"

Many important instance-complexity problems are still open. First, in traditional complexity theory, instance dependent lower bounds can sometimes be tight up to constants~\cite{afshani2017instance}. However, determining the lower bounds for stochastic approximation instances requires a reformulation of the complexity definition, such as what information is available. 
For example, in the dynamic bounds setup, a different step size choice,
\begin{align*}
    \eta_k = R /  (\sigma_k^2 \sqrt{\textstyle\sum_{i=1}^T \tfrac{1}{\sigma_i^2}} ),
\end{align*}
may lead to a better error bound. Yet, this kind of step size is at the same time dynamic and non-causal (depends on information from future iterates). Therefore, information dependence needs to be properly integrated in the lower bound of instance complexity.  

Second, beyond settings such as smooth-convex problems, we currently do not know of any faster instance-dependent bounds. 
Even if better bounds can be achieved when noise levels are known, it could still be unclear whether it can be achieved by practical and implementable algorithms. Another question is whether in our setup where the function is smooth and convex the $T^{1/9}$ factor can be improved.  We believe understanding  these problems can provide more insight into practical algorithm performances and lead to invention of new gradient based algorithms.

Last but not least, an apparent limitation of the stochastic approximation setting is the assumption of an exogenous  noise. It is usually not satisfied in the  standard empirical risk minimization framework, where the noise is iterate dependent, i.e. $x$-dependent. Note that the iterates generated by one algorithm is mostly very different from the iterates generated by another one, how to appropriately quantify the state-dependency such that we can derive non-vacuous instance complexity result becomes challenging. Though simplified, we believe our work provides a solid first step towards this ultimate goal. 

\subsection*{Acknowledgments} SS acknowledges support from an NSF CAREER award (number 1846088), and NSF CCF-2112665 (TILOS AI Research Institute). JZ acknowledges support from a IIIS young scholar fellowship.

\bibliography{main}

\begin{thebibliography}{48}
\providecommand{\natexlab}[1]{#1}
\providecommand{\url}[1]{\texttt{#1}}
\expandafter\ifx\csname urlstyle\endcsname\relax
  \providecommand{\doi}[1]{doi: #1}\else
  \providecommand{\doi}{doi: \begingroup \urlstyle{rm}\Url}\fi

\bibitem[Afshani et~al.(2017)Afshani, Barbay, and Chan]{afshani2017instance}
Afshani, P., Barbay, J., and Chan, T.~M.
\newblock Instance-optimal geometric algorithms.
\newblock \emph{Journal of the ACM (JACM)}, 64\penalty0 (1):\penalty0 1--38,
  2017.

\bibitem[Agarwal et~al.(2009)Agarwal, Wainwright, Bartlett, and
  Ravikumar]{Agarwal2009}
Agarwal, A., Wainwright, M.~J., Bartlett, P.~L., and Ravikumar, P.~K.
\newblock Information-theoretic lower bounds on the oracle complexity of convex
  optimization.
\newblock 2009.

\bibitem[Allen-Zhu(2016)]{AllenZhu2016b}
Allen-Zhu, Z.
\newblock Katyusha: The first direct acceleration of stochastic gradient
  methods.
\newblock March 2016.

\bibitem[Arjevani et~al.(2019)Arjevani, Carmon, Duchi, Foster, Srebro, and
  Woodworth]{Arjevani2019}
Arjevani, Y., Carmon, Y., Duchi, J.~C., Foster, D.~J., Srebro, N., and
  Woodworth, B.
\newblock Lower bounds for non-convex stochastic optimization.
\newblock \emph{arXiv preprint arXiv:1912.02365}, 2019.

\bibitem[Azar et~al.(2017)Azar, Osband, and Munos]{Azar2017}
Azar, M.~G., Osband, I., and Munos, R.
\newblock Minimax regret bounds for reinforcement learning.
\newblock 2017.

\bibitem[Besbes et~al.(2014)Besbes, Gur, and Zeevi]{Besbes2014}
Besbes, O., Gur, Y., and Zeevi, A.
\newblock Stochastic multi-armed-bandit problem with non-stationary rewards.
\newblock In \emph{Advances in neural information processing systems}, pp.\
  199--207, 2014.

\bibitem[Borgwardt(2012)]{borgwardt2012simplex}
Borgwardt, K.~H.
\newblock \emph{The Simplex Method: a probabilistic analysis}, volume~1.
\newblock Springer Science \& Business Media, 2012.

\bibitem[Defazio \& Bottou(2018)Defazio and Bottou]{Defazio2018}
Defazio, A. and Bottou, L.
\newblock On the ineffectiveness of variance reduced optimization for deep
  learning.
\newblock \emph{arXiv preprint arXiv:1812.04529}, 2018.

\bibitem[Duchi et~al.(2011)Duchi, Hazan, and Singer]{Duchi2011}
Duchi, J., Hazan, E., and Singer, Y.
\newblock Adaptive subgradient methods for online learning and stochastic
  optimization.
\newblock \emph{Journal of Machine Learning Research}, 12\penalty0
  (Jul):\penalty0 2121--2159, 2011.

\bibitem[Dulac-Arnold et~al.(2019)Dulac-Arnold, Mankowitz, and
  Hester]{DulacArnold2019}
Dulac-Arnold, G., Mankowitz, D., and Hester, T.
\newblock Challenges of real-world reinforcement learning.
\newblock \emph{arXiv preprint arXiv:1904.12901}, 2019.

\bibitem[Fabian et~al.(1968)]{Fabian1968}
Fabian, V. et~al.
\newblock On asymptotic normality in stochastic approximation.
\newblock \emph{The Annals of Mathematical Statistics}, 39\penalty0
  (4):\penalty0 1327--1332, 1968.

\bibitem[Fagin et~al.(2003)Fagin, Lotem, and Naor]{fagin2003optimal}
Fagin, R., Lotem, A., and Naor, M.
\newblock Optimal aggregation algorithms for middleware.
\newblock \emph{Journal of computer and system sciences}, 66\penalty0
  (4):\penalty0 614--656, 2003.

\bibitem[Fang et~al.(2018{\natexlab{a}})Fang, Li, Lin, and Zhang]{Fang2018}
Fang, C., Li, C.~J., Lin, Z., and Zhang, T.
\newblock Spider: Near-optimal non-convex optimization via stochastic path
  integrated differential estimator.
\newblock \emph{arXiv preprint arXiv:1807.01695}, 2018{\natexlab{a}}.

\bibitem[Fang et~al.(2018{\natexlab{b}})Fang, Li, Lin, and Zhang]{Fang2018a}
Fang, C., Li, C.~J., Lin, Z., and Zhang, T.
\newblock Spider: Near-optimal non-convex optimization via stochastic path
  integrated differential estimator, 2018{\natexlab{b}}.

\bibitem[Ghadimi \& Lan(2012)Ghadimi and Lan]{Ghadimi2012}
Ghadimi, S. and Lan, G.
\newblock Optimal stochastic approximation algorithms for strongly convex
  stochastic composite optimization i: A generic algorithmic framework.
\newblock \emph{SIAM Journal on Optimization}, 22\penalty0 (4):\penalty0
  1469--1492, 2012.

\bibitem[Ghadimi et~al.(2013)Ghadimi, Lan, and Zhang]{Ghadimi2013a}
Ghadimi, S., Lan, G., and Zhang, H.
\newblock Mini-batch stochastic approximation methods for nonconvex stochastic
  composite optimization, 2013.

\bibitem[Hoare(1962)]{hoare1962quicksort}
Hoare, C.~A.
\newblock Quicksort.
\newblock \emph{The Computer Journal}, 5\penalty0 (1):\penalty0 10--16, 1962.

\bibitem[Jadbabaie et~al.(2015)Jadbabaie, Rakhlin, Shahrampour, and
  Sridharan]{Jadbabaie2015}
Jadbabaie, A., Rakhlin, A., Shahrampour, S., and Sridharan, K.
\newblock Online optimization: Competing with dynamic comparators.
\newblock In \emph{Artificial Intelligence and Statistics}, pp.\  398--406,
  2015.

\bibitem[Jin et~al.(2017)Jin, Netrapalli, and Jordan]{Jin2017b}
Jin, C., Netrapalli, P., and Jordan, M.~I.
\newblock Accelerated gradient descent escapes saddle points faster than
  gradient descent.
\newblock \emph{arXiv preprint arXiv:1711.10456}, 2017.

\bibitem[Khamaru et~al.(2021)Khamaru, Xia, Wainwright, and
  Jordan]{khamaru2021instance}
Khamaru, K., Xia, E., Wainwright, M.~J., and Jordan, M.~I.
\newblock Instance-optimality in optimal value estimation: Adaptivity via
  variance-reduced q-learning.
\newblock \emph{arXiv preprint arXiv:2106.14352}, 2021.

\bibitem[Kingma \& Ba(2014)Kingma and Ba]{Kingma2014}
Kingma, D.~P. and Ba, J.
\newblock {ADAM}: A method for stochastic optimization.
\newblock \emph{arXiv preprint arXiv:1412.6980}, 2014.

\bibitem[Kushner \& Yin(2003)Kushner and Yin]{Kushner2003}
Kushner, H. and Yin, G.~G.
\newblock \emph{Stochastic approximation and recursive algorithms and
  applications}, volume~35.
\newblock Springer Science \& Business Media, 2003.

\bibitem[Lacotte \& Pilanci(2020)Lacotte and Pilanci]{lacotte2020optimal}
Lacotte, J. and Pilanci, M.
\newblock Optimal randomized first-order methods for least-squares problems.
\newblock In \emph{International Conference on Machine Learning}, pp.\
  5587--5597. PMLR, 2020.

\bibitem[Levy et~al.(2018)Levy, Yurtsever, and Cevher]{Levy2018}
Levy, K.~Y., Yurtsever, A., and Cevher, V.
\newblock Online adaptive methods, universality and acceleration.
\newblock 2018.

\bibitem[Lin et~al.(2015)Lin, Mairal, and Harchaoui]{Lin2015}
Lin, H., Mairal, J., and Harchaoui, Z.
\newblock A universal catalyst for first-order optimization.
\newblock In \emph{Advances in Neural Information Processing Systems}, pp.\
  3384--3392, 2015.

\bibitem[Liu et~al.(2020)Liu, Zhang, Orabona, and Yang]{liu2020adam}
Liu, M., Zhang, W., Orabona, F., and Yang, T.
\newblock Adam+: A stochastic method with adaptive variance reduction.
\newblock \emph{arXiv preprint arXiv:2011.11985}, 2020.

\bibitem[Luo et~al.(2019)Luo, Xiong, Liu, and Sun]{luo2019adaptive}
Luo, L., Xiong, Y., Liu, Y., and Sun, X.
\newblock Adaptive gradient methods with dynamic bound of learning rate.
\newblock \emph{arXiv preprint arXiv:1902.09843}, 2019.

\bibitem[Mokhtari et~al.(2016)Mokhtari, Shahrampour, Jadbabaie, and
  Ribeiro]{Mokhtari2016}
Mokhtari, A., Shahrampour, S., Jadbabaie, A., and Ribeiro, A.
\newblock Online optimization in dynamic environments: Improved regret rates
  for strongly convex problems.
\newblock In \emph{2016 IEEE 55th Conference on Decision and Control (CDC)},
  pp.\  7195--7201. IEEE, 2016.

\bibitem[Mou et~al.(2021)Mou, Pananjady, Wainwright, and
  Bartlett]{mou2021optimal}
Mou, W., Pananjady, A., Wainwright, M.~J., and Bartlett, P.~L.
\newblock Optimal and instance-dependent guarantees for markovian linear
  stochastic approximation.
\newblock \emph{arXiv preprint arXiv:2112.12770}, 2021.

\bibitem[Moulines \& Bach(2011)Moulines and Bach]{Moulines2011}
Moulines, E. and Bach, F.~R.
\newblock Non-asymptotic analysis of stochastic approximation algorithms for
  machine learning.
\newblock 2011.

\bibitem[Nemirovski et~al.(2009)Nemirovski, Juditsky, Lan, and
  Shapiro]{Nemirovski2009}
Nemirovski, A., Juditsky, A., Lan, G., and Shapiro, A.
\newblock Robust stochastic approximation approach to stochastic programming.
\newblock \emph{SIAM Journal on Optimization}, 19\penalty0 (4):\penalty0
  1574--1609, 2009.

\bibitem[Nesterov(2013)]{Nesterov2013}
Nesterov, Y.
\newblock \emph{Introductory lectures on convex optimization: A basic course},
  volume~87.
\newblock Springer Science \& Business Media, 2013.

\bibitem[Pananjady \& Wainwright(2020)Pananjady and
  Wainwright]{pananjady2020instance}
Pananjady, A. and Wainwright, M.~J.
\newblock Instance-dependent l-infinity-bounds for policy evaluation in tabular
  reinforcement learning.
\newblock \emph{IEEE Transactions on Information Theory}, 67\penalty0
  (1):\penalty0 566--585, 2020.

\bibitem[Paquette et~al.(2021)Paquette, Lee, Pedregosa, and
  Paquette]{paquette2021sgd}
Paquette, C., Lee, K., Pedregosa, F., and Paquette, E.
\newblock Sgd in the large: Average-case analysis, asymptotics, and stepsize
  criticality.
\newblock \emph{arXiv preprint arXiv:2102.04396}, 2021.

\bibitem[Pedregosa \& Scieur(2020)Pedregosa and Scieur]{pedregosa2020average}
Pedregosa, F. and Scieur, D.
\newblock Average-case acceleration through spectral density estimation.
\newblock \emph{arXiv preprint arXiv:2002.04756}, 2020.

\bibitem[Polyak(1987)]{Polyak1987}
Polyak, B.~T.
\newblock Introduction to optimization. optimization software.
\newblock \emph{Inc., Publications Division, New York}, 1, 1987.

\bibitem[Preparata \& Shamos(2012)Preparata and
  Shamos]{preparata2012computational}
Preparata, F.~P. and Shamos, M.~I.
\newblock \emph{Computational geometry: an introduction}.
\newblock Springer Science \& Business Media, 2012.

\bibitem[Rakhlin et~al.(2011)Rakhlin, Shamir, and Sridharan]{Rakhlin2011}
Rakhlin, A., Shamir, O., and Sridharan, K.
\newblock Making gradient descent optimal for strongly convex stochastic
  optimization.
\newblock \emph{arXiv preprint arXiv:1109.5647}, 2011.

\bibitem[Reddi et~al.(2019)Reddi, Kale, and Kumar]{Reddi2019}
Reddi, S.~J., Kale, S., and Kumar, S.
\newblock On the convergence of {ADAM} and beyond.
\newblock \emph{arXiv preprint arXiv:1904.09237}, 2019.

\bibitem[Robbins \& Monro(1951)Robbins and Monro]{Robbins1951}
Robbins, H. and Monro, S.
\newblock A stochastic approximation method.
\newblock \emph{Annals of Mathematical Statistics}, 22:\penalty0 400--407,
  1951.

\bibitem[Spielman(2005)]{spielman2005smoothed}
Spielman, D.~A.
\newblock The smoothed analysis of algorithms.
\newblock In \emph{International Symposium on Fundamentals of Computation
  Theory}, pp.\  17--18. Springer, 2005.

\bibitem[Strehl \& Littman(2008)Strehl and Littman]{Strehl2008}
Strehl, A.~L. and Littman, M.~L.
\newblock An analysis of model-based interval estimation for markov decision
  processes.
\newblock \emph{Journal of Computer and System Sciences}, 74\penalty0
  (8):\penalty0 1309--1331, 2008.

\bibitem[Tieleman \& Hinton(2012)Tieleman and Hinton]{Tieleman2012}
Tieleman, T. and Hinton, G.
\newblock Lecture 6.5-rmsprop: Divide the gradient by a running average of its
  recent magnitude.
\newblock \emph{COURSERA: Neural networks for machine learning}, 4\penalty0
  (2):\penalty0 26--31, 2012.

\bibitem[Ward et~al.(2018)Ward, Wu, and Bottou]{Ward2018}
Ward, R., Wu, X., and Bottou, L.
\newblock {Adagrad} stepsizes: Sharp convergence over nonconvex landscapes,
  from any initialization.
\newblock \emph{arXiv preprint arXiv:1806.01811}, 2018.

\bibitem[Wilson et~al.(2017)Wilson, Roelofs, Stern, Srebro, and
  Recht]{Wilson2017}
Wilson, A.~C., Roelofs, R., Stern, M., Srebro, N., and Recht, B.
\newblock The marginal value of adaptive gradient methods in machine learning.
\newblock In \emph{Advances in Neural Information Processing Systems}, pp.\
  4148--4158, 2017.

\bibitem[Yang et~al.(2016)Yang, Zhang, Jin, and Yi]{yang2016tracking}
Yang, T., Zhang, L., Jin, R., and Yi, J.
\newblock Tracking slowly moving clairvoyant: Optimal dynamic regret of online
  learning with true and noisy gradient.
\newblock In \emph{International Conference on Machine Learning}, pp.\
  449--457. PMLR, 2016.

\bibitem[Zhang et~al.(2019)Zhang, He, Sra, and Jadbabaie]{Zhang2019}
Zhang, J., He, T., Sra, S., and Jadbabaie, A.
\newblock Analysis of gradient clipping and adaptive scaling with a relaxed
  smoothness condition.
\newblock \emph{arXiv preprint arXiv:1905.11881}, 2019.

\bibitem[Zhou et~al.(2018)Zhou, Tang, Yang, Cao, and Gu]{Zhou2018c}
Zhou, D., Tang, Y., Yang, Z., Cao, Y., and Gu, Q.
\newblock On the convergence of adaptive gradient methods for nonconvex
  optimization.
\newblock \emph{arXiv preprint arXiv:1808.05671}, 2018.

\end{thebibliography}
\bibliographystyle{icml2022}

\newpage
\appendix
\onecolumn

\section{Proof of Theorem~\ref{thm:baselines}}

\begin{proof} 
Using the fact that $\frac{1}{L} \| \nabla f(y) - \nabla f(x)  \|^2 \le \langle \nabla f(y) - \nabla f(x), y-x \rangle $, we have
\begin{align*}
    \E[\| x_{k+1} -x^* \|^2 | \F_k]  & =  \| x_{k} -x^* \|^2 - 2\eta_k \langle \nabla f(x_k), x_k - x^* \rangle + \eta_k^2 \| \nabla f(x_k)^2\| + \eta_k^2 \sigma_k^2 \\
    & \le  \| x_{k} -x^* \|^2 - \eta_k(2-L\eta_k) \langle \nabla f(x_k), x_k - x^* \rangle  + \eta_k^2 \sigma_k^2 \\
    & \le  \| x_{k} -x^* \|^2 - \eta_k \langle \nabla f(x_k), x_k - x^* \rangle  + \eta_k^2 \sigma_k^2
\end{align*}
As $f(x_k)-f^* \le \langle \nabla f(x_k), x_k - x^* \rangle$. We have 
\[ \eta_k (f(x_k)-f^*) \le \| x_{k} -x^* \|^2 - \E[\| x_{k+1} -x^* \|^2 | \F_k] + \eta_k^2 \sigma_k^2\]
Taking expectation and telescoping yields 
\[ \E \left [ \sum_{k=1}^T \eta_k (f(x_k)-f^*) \right ] \le \| x_{1} -x^* \|^2 - \E[\| x_{T+1} -x^* \|^2 ] + \sum_{k=1}^T \eta_k^2 \sigma_k^2\]
\end{proof}

\section{Proof of Lemma ~\ref{lemma:ordering}} 

The only nontrivial one is $\epsilon_{\text{dynamic}} \preceq \epsilon_{\text{adaptive}}$. This follows from Jensen's inequality $ \E[X]^{-2} \le \E[X^{-2}] $,  and we have 
\[ (\tfrac{1}{T}\tsum_k \tfrac{1}{\sigma_k})^{-2} \le \tfrac{1}{T}\tsum_k \sigma_k^2,\].

\section{Summation series in Example~\ref{example2}}

In Example~\ref{example2}, we have
\begin{align*}
\sigma_k = \frac{1}{\sqrt{1+ T^{\alpha} \left (\frac{2k}{T} - 1 \right )^2}}
\end{align*}
\begin{itemize}
    \item The second moment of $\sigma_k$ is given by 
    \begin{align*}
        \frac{1}{T} \sum_{k=1}^T \sigma_k^2 = \frac{1}{T} \sum_{k=1}^T \frac{1}{{1+ T^{\alpha} \left (\frac{2k}{T} - 1 \right )^2}} & \quad \sim \quad  \frac{1}{T} \int_{0}^{T} \frac{1}{{1+ T^{\alpha} \left (\frac{2x}{T} - 1 \right )^2}} dx \\
        & \stackrel{u := \frac{2x}{T}-1}{=}  \frac{1}{T}\int_{-1}^{1} \frac{1}{1+ T^{\alpha} u^2 } \frac{du}{\frac{2}{T}} \\
        & = \int_{0}^{1} \frac{1}{1+ T^{\alpha} u^2 } du \\
        & = \left [\frac{1}{T^{\frac{\alpha}{2}}} arctan (x) \right ]_{0}^{T^{\frac{\alpha}{2}}} \sim \frac{1}{T^{\frac{\alpha}{2}}}
    \end{align*}
    This implies that the adaptive bound \ref{eq:adaptive} is of order $O(T^{-\left (\frac{1}{2}+\frac{\alpha}{4} \right )} )$.
    \item The harmonic sum of $\sigma_k$ is given by
    \begin{align*}
        \frac{1}{T} \sum_{k=1}^T \frac{1}{\sigma_k} = \frac{1}{T} \sum_{k=1}^T {\sqrt{1+ T^{\alpha} \left (\frac{2k}{T} - 1 \right )^2}} \,\,  &  =    O\left ( \frac{1}{T} \sum_{k=1}^T 1 + T^{\frac{\alpha}{2}} \left |\frac{2k}{T}-1 \right |  \right ) \\
        & = O\left (1 + \frac{1}{T} T^{\frac{\alpha}{2}} \frac{2 \sum_{k= 0}^{\frac{T}{2}} (T-2k)}{T} \right )\\
        & = O\left (1 + \frac{1}{T} T^{\frac{\alpha}{2}} \frac{T^2}{T} \right ) = O\left (T^{\frac{\alpha}{2}} \right )
    \end{align*}
    This implies that the dynamic bound\ref{eq:dynamic} is of order $O(T^{-\left (\frac{1}{2}+\frac{\alpha}{2} \right )} )$.

\end{itemize}

\section{Key Lemma: Total estimation error of the variance estimator} \label{appendix:keylemma}
\begin{lemma}\label{thm:online}
	Under Assumptions~\ref{assump:sum},
	taking $\beta = 1 - 2T^{-2/3}$, the total estimation error of the $\hat{\sigma}_k^2$ based on~\eqref{eq:exp-est} is bounded by:
	\begin{align*}
	\E \left [\sum_{t=1}^{T/2} | \hat{\sigma}_{t}^2 - \sigma_{2t}^2 / 2 - \sigma_{2t+1}^2 / 2 | \right ] \le 2(D^2 + M^2 ) T^{2/3} \ln (T^{2/3})
	\end{align*}
\end{lemma}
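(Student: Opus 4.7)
}

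The plan is to write down a recursion for the one-step estimation error, unroll it to expose the three sources of error (the stale initial condition, the deterministic drift of $m_k^2$, and the zero-mean stochastic fluctuation of $\|g_k\|^2$), bound each source separately, and finally tune $\beta$ to balance the bias against the variance. Let
\begin{equation*}
e_k \;:=\; \hat{m}_k^2 - m_k^2, \qquad \delta_k \;:=\; m_k^2 - m_{k+1}^2, \qquad \xi_k \;:=\; \|g_k\|^2 - m_k^2.
\end{equation*}
By Assumption~\ref{assump:sum}, $\sum_k |\delta_k| \le D^2$ and $\E[\xi_k] = 0$, $\E[\xi_k^2] \le M^4$; moreover the $\xi_k$'s are mutually independent across $k$ since the stochastic gradient samples are independent.

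First I would substitute the definition of $\hat{m}_{k+1}^2$ into $e_{k+1}$ and regroup to obtain the one-step recursion
\begin{equation*}
e_{k+1} \;=\; \beta\, e_k \;+\; \delta_k \;+\; (1-\beta)\, \xi_k .
\end{equation*}
Unrolling gives the clean closed form
\begin{equation*}
e_{k+1} \;=\; \beta^{k} e_1 \;+\; \sum_{i=1}^{k} \beta^{k-i} \delta_i \;+\; (1-\beta)\sum_{i=1}^{k} \beta^{k-i}\, \xi_i .
\end{equation*}
Applying the triangle inequality, summing over $k$, and taking expectations, I reduce the problem to three separate sums:
\begin{equation*}
\E\!\Bigl[\tsum_{k=1}^{T}|e_k|\Bigr] \;\le\; \tsum_{k}\beta^{k-1}\,\E|e_1| \;+\; \tsum_{k}\tsum_{i<k}\beta^{k-1-i}|\delta_i| \;+\; (1-\beta)\tsum_{k}\E\!\left|\tsum_{i<k}\beta^{k-1-i}\xi_i\right|.
\end{equation*}

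Next I would bound each term. The initial term is controlled by $\E|e_1|=\E|\xi_1|\le M^2$ and the geometric series $\sum_k\beta^{k-1}\le 1/(1-\beta)$. For the bias (drift) term, swap the order of summation (Fubini) and use $\sum_{k\ge i}\beta^{k-i}\le 1/(1-\beta)$ together with the total-variation bound $\sum_i|\delta_i|\le D^2$; this gives at most $D^2/(1-\beta)$. For the variance term I would use independence and Cauchy--Schwarz: since the $\xi_i$ are independent with $\E\xi_i^2\le M^4$,
\begin{equation*}
\E\!\left|\tsum_{i<k}\beta^{k-1-i}\xi_i\right| \;\le\; \sqrt{\tsum_{i<k}\beta^{2(k-1-i)}\,M^4} \;\le\; \frac{M^2}{\sqrt{1-\beta^2}}.
\end{equation*}
After multiplying by $(1-\beta)$ and summing over $k$, the stochastic contribution is $\cO(M^2 T\sqrt{1-\beta})$. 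Plugging in $\beta = 1-2T^{-2/3}$ so that $1/(1-\beta) = \tfrac{1}{2}T^{2/3}$ balances the bias and variance terms, all three contributions come out of order $(D^2+M^2)T^{2/3}$, as required.

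The main obstacle is twofold. The easy part is showing the correct power $T^{2/3}$, which comes cleanly from the bias/variance tradeoff in $\beta$. The subtle part is accounting for the $\ln(T^{2/3})$ factor in the stated bound: the Cauchy--Schwarz + independence bound above does not by itself produce a logarithm, so to reach the stated form I would instead bound the stochastic term uniformly in $k$ via a maximal/concentration argument (e.g.~a Doob/Azuma--style inequality applied to the weighted martingale $\sum_{i<k}\beta^{k-1-i}\xi_i$, followed by a union bound over $k\le T$); the union bound over the $\cO(T)$ windows is what injects the $\ln T$ factor and pushes the constants up to the stated $2(D^2+M^2)$. The rest of the proof is a careful bookkeeping exercise showing that each of the three contributions is dominated by $(D^2+M^2)T^{2/3}\ln(T^{2/3})$ once $\beta$ is set as above.
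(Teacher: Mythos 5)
Your decomposition is correct and in fact \emph{cleaner and tighter} than the paper's. The one-step error recursion $e_{k+1}=\beta e_k+\delta_k+(1-\beta)\xi_k$ (with $e_1=\xi_1$) isolates the stale initialization, the drift, and the stochastic fluctuation, and after unrolling and applying the triangle inequality you bound each of the three sums separately: $\frac{M^2}{1-\beta}$ for the initialization (since $\E|\xi_1|\le\sqrt{\E\xi_1^2}\le M^2$ and $\sum_k\beta^{k-1}\le\frac{1}{1-\beta}$), $\frac{D^2}{1-\beta}$ for the drift, and $(1-\beta)\sum_k\E\bigl|\sum_{i<k}\beta^{k-1-i}\xi_i\bigr|\le T M^2\sqrt{1-\beta}$ for the fluctuation. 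With $\beta=1-2T^{-2/3}$ this gives $\lesssim 2(D^2+M^2)T^{2/3}$, which is \emph{already smaller} than the stated $2(D^2+M^2)T^{2/3}\ln(T^{2/3})$ whenever $\ln(T^{2/3})\ge 1$. So the lemma is proved; there is no gap. The paper proceeds differently: it does not peel off the initialization term, but bounds $\E|\hat m_k^2-\E\hat m_k^2|\le\sqrt{\Var[\hat m_k^2]}$ with the full coefficient vector $\bigl((1-\beta),(1-\beta)\beta,\dots,\beta^{k-1}\bigr)$, whose squared $\ell_2$-norm is only $\cO(1)$ during a burn-in window $k\lesssim\gamma=\tfrac{1}{2(1-\beta)}\ln\tfrac{1}{1-\beta}$; the $\ln(T^{2/3})$ in the statement is precisely $\gamma\cdot(1-\beta)$, i.e.\ an artifact of that cruder accounting of the early iterations, not an intrinsic feature of the estimator.

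The one thing to drop from your plan is the proposed ``fix'' at the end: you say your Cauchy--Schwarz bound ``does not by itself produce a logarithm'' and therefore propose switching to a Doob/Azuma maximal inequality plus a union bound over $k\le T$ to ``inject'' a $\ln T$. This reasoning is backwards. Proving a \emph{smaller} upper bound than the one stated still establishes the lemma; you should not weaken your argument to match the shape of the target constant. Moreover, the Azuma route would need sub-exponential tails for $\|g_k\|^2$, which Assumption~\ref{assump:sum}(a) does not give you (it is only a fourth-moment bound), whereas your $L^1\le L^2$ step uses exactly the assumed fourth moment. One small point to state explicitly: the $\xi_i$'s are not literally i.i.d.\ (because $x_k$ depends on $g_1,\dots,g_{k-1}$), but they form a martingale difference sequence with $\E[\xi_i^2\mid\F_{i-1}]\le M^4$ since the $m_k$ are deterministic by Assumption~\ref{assump:noise}; orthogonality $\E[\xi_i\xi_j]=0$ for $i\ne j$ is all your Cauchy--Schwarz step requires, which is the same fact the paper invokes under the name ``independence of $g_i$ given $g_1,\dots,g_{i-1}$.''
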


\begin{proof} 
	On a high level, we decouple the error in a bias term and a variance term. We use the total variation assumption to bound the bias term, and use the exponential moving average to reduce variance. Then we pick $\beta$ to balance the two terms.
	
	For simplicity, we denote $ \text{var}_t^2 =  \sigma_{2t}^2 / 2 + \sigma_{2t+1}^2 / 2$.
	
	From triangle inequality, we have 
	\begin{equation}
	\sum_{t=0}^{T/2}   \E \left [| \hat{\sigma}_{t}^2 - \text{var}_t^2 | \right ] \le  \sum_{t=1}^{T/2}  \underbrace{ \E \left [| \hat{\sigma}_{t}^2 -  \E [\hat{\sigma}_{t}^2] |\right ]}_{\text{Variance term}} + \sum_{t=1}^{T/2} \underbrace{ \left | \E [\hat{\sigma}_{t}^2]  - \text{var}_t^2  \right | }_{\text{Bias term}}
	\end{equation}
	We first bound the bias term. By definition of $\hat{\sigma}_t$, we have 
	\begin{align*}
	\E [\hat{\sigma}_{t}^2] - \text{var}_t^2 & = \beta \E [ \hat{\sigma}_{t-1}^2] + (1-\beta)\text{var}_{t-1}^2 - \text{var}_t^2 \\
	& = \beta (\E [ \hat{\sigma}_{t-1}^2] - \text{var}_{t-1}^2) + (\text{var}_{t-1}^2- {\text{var}}_{t}^2) 
	\end{align*}
	Hence by recursion,
	\begin{align*}
	\E [\hat{\sigma}_{t}^2] - \text{var}_t^2 & = \beta^{t-1} \underbrace{(\E [ \hat{\sigma}_{1}^2] - {\text{var}}_{1}^2)}_{=0} + \beta^{t-2} ( {\sigma}_{1}^2- \text{var}_2^2) + \cdots +  (\text{var}_{t-1}^2- {\text{var}}_{t}^2) 
	\end{align*}
	Therefore, the bias term could be bounded by
	\begin{align*}
	\sum_{t=1}^{T/2} \left | \E [\hat{\sigma}_{t}^2]  - \text{var}_t^2  \right | & \le \sum_{t=1}^{T/2} \sum_{j=1}^{t-1} \beta^{t-1-j}  \left |{\text{var}}_{j}^2- {\text{var}}_{j+1}^2 \right | \\
	& \le \sum_{k=1}^T \sum_{j=1}^{k-1} \beta^{k-1-j}  \left |{\sigma}_{k}^2- {\sigma}_{k+1}^2  \right | \\
	& = \sum_{k=1}^{T-1} \left |{\sigma}_{k}^2- {\sigma}_{k+1}^2 \right | \sum_{j=0}^{T-1-k} \beta^{j}   \\
	& \le \frac{1}{1-\beta}\sum_{k=1}^{T-1} \left |{\sigma}_{k}^2- {\sigma}_{k+1}^2 \right | \\
	& \le \frac{D^2}{1-\beta} \quad \text{(From Assumption (\ref{assump:sum}))}
	\end{align*}
	The first inequality follows by traingle inequality. The third inequality uses the geometric sum over $\beta$. To bound the variance term, we remark that 
	\[ \hat{\sigma}_{t}^2 = (1-\beta)y_{t-1}^2 + (1-\beta) \beta y_{t-2}^2 + \cdots + (1-\beta)\beta^{t-2} y_1^2 + \beta^{t-1} y_0^2.   \]
	where we denote $$y_t = \frac{\|g_{t,1} - g_{t,2}\|^2}{2}.$$
	Hence from independence of the gradients, we have 
	\begin{align*}
	\E \left [| \hat{\sigma}_{t}^2 -  \E [\hat{\sigma}_{t}^2] |\right ] & \le \sqrt{\text{Var}[\hat{\sigma}_{t}^2]} \\
	& = \sqrt{ \text{Var}[(1-\beta) y_{t-1}^2] +  \text{Var}[ (1-\beta)\beta y_{t-2}^2] + \cdots +  \text{Var}[(1-\beta)\beta^{t-2} y_1^2] +  \text{Var}[\beta^{t-1} y_0^2] }\\
	& \le  \sqrt{ (1-\beta)^2 +  (1-\beta)^2\beta^2  + \cdots +  (1-\beta)^2\beta^{2(t-2)} +  \beta^{2(t-1)} } M^2 ,
	\end{align*}
	where $M^2$ is an upperbound on the variance. The first inequality follows by Jensen's inequality. The second equality uses independence of $y_i$ given $g_1, ..., g_{i-1}$. The last inequality follows by assumption~\ref{assump:sum}. 
	
	We distinguish two cases, when $t$ is small, we simply bound the coefficient by 1, i.e.
	\[  \sqrt{ (1-\beta)^2 +  (1-\beta)^2\beta^2  + \cdots +  (1-\beta)^2\beta^{2(t-2)} +  \beta^{2(t-1)} } \le 1 \]
	When $t$ is large such that $t \ge  1 + \gamma$, with $\gamma =  \frac{1}{2(1-\beta)} \ln(\frac{1}{1-\beta})$, we have $\beta^{2(t-1)} \le 1- \beta $, thus
	\begin{align*}
	&  \sqrt{ (1-\beta)^2 +  (1-\beta)^2\beta^2  + \cdots +  (1-\beta)^2\beta^{2(t-2)} +  \beta^{2(t-1)} }  \\
	\le & \sqrt{ \frac{(1-\beta)^2}{1- \beta^2} + \beta^{2(t-1)}  }  \\
	\le & \sqrt{ \frac{(1-\beta)^2}{1- \beta^2} + (1-\beta)  } \\
	\le & \sqrt{ 2(1-\beta)  } 
	\end{align*}
	The second inequality follows by $t \ge  1 + \gamma$, with $\gamma =  \frac{1}{2(1-\beta)} \ln(\frac{1}{1-\beta})$. Therefore, when $t \ge  1 + \gamma$,
	\begin{align*}
	\E \left [| \hat{\sigma}_{t}^2 -  \E [\hat{\sigma}_{t}^2] |\right ] \le \sqrt{ 2(1-\beta)  } M
	\end{align*}

	Therefore, substitute in the above equation into the 
	\begin{align*}
	\sum_{t=1}^{T/2} \E \left [| \hat{\sigma}_{t}^2 -  \E [\hat{\sigma}_{t}^2] |\right ] & = \sum_{t=1}^{\gamma}  \E \left [| \hat{\sigma}_{t}^2 -  \E [\hat{\sigma}_{t}^2] |\right ] + \sum_{t= \gamma+1}^{T/2} \E \left [| \hat{\sigma}_{t}^2 -  \E [\hat{\sigma}_{t}^2] |\right ] \\
	& \le (\gamma + (T-\gamma)\sqrt{ 2(1-\beta)  } ) M^2  \\
	\end{align*} 
	Summing up the variance term and the bias term yields,
	\begin{equation}
	\sum_{t=0}^{T/2}   \E \left [| \hat{\sigma}_{t}^2 - \text{var}_t^2 | \right ] \le  \frac{D^2}{1-\beta} + (\gamma + (T-\gamma)\sqrt{ 2(1-\beta)  } ) M^2
	\end{equation}
	Taking $\beta = 1 - T^{-2/3}/2$ yields, 
	\begin{equation}
	\sum_{t=0}^{T/2}   \E \left [| \hat{\sigma}_{t}^2 - \text{var}_t^2 | \right ] \le 2(D^2 + M^2 ) T^{2/3} \ln (T^{2/3})
	\end{equation}
	

\end{proof}

\section{Proof of Theorem~\ref{thm:convex}}

On a high level, the difference between the adaptive stepsize and the idealized harmonic stepsize mainly depends on the estimation error $|\hat{\sigma}_k^2 -\sigma_k^2|$, which has a sublinear regret according to Lemma~\ref{appendix:keylemma}. Then we carefully integrate this regret bound to control the derivation from the idealized algorithm, reaching the conclusion.

\begin{proof}
	By the update rule of $x_{t+1}$, we have,
	\begin{align*}
	\|x_{t+1} -  x^*\|^2 = \|x_t - \eta_t \overline{g_t} -  x^*\|^2 = \|x_{t} -  x^*\|^2 - 2\eta_t \iprod{\overline{g_t}}{x_t - x^*} + \eta_t^2 \|\overline{g_t}\|^2.
	\end{align*}
	Noting that the stepsize $\eta_t$ is independent of $\overline{g_t}$ conditioned on $\overline{g_{t-1}}$. Recall that for simplicity, we denote $ \text{var}_t^2 =  \sigma_{2t}^2 / 2 + \sigma_{2t+1}^2 / 2$.
Taking  expectation with respect to $g_{t}$ conditional on the past iterates lead to
	\begin{align*}
	2\eta_t (f(x_t) - f^*) & \le 2\eta_t \iprod{\nabla f(x_t)}{x_t - x^*}  \\
	& =  \E [ 2\eta_t \iprod{\overline{g_t}}{x_t - x^*} | x_t, \cdots ,x_1] \\
	& =  - \E[\|x_{t+1} -  x^*\|^2 | x_t, \cdots ,x_1] + \|x_t -  x^*\|^2 + \eta_t^2 (\|\nabla f(x_t)\|^2 + \text{var}_t^2)\\
	& \le  - \E[\|x_{t+1} -  x^*\|^2 | x_t, \cdots ,x_1] + \|x_t -  x^*\|^2 +  \eta_t^2\sigma_t^2 + L\eta_t^2 \iprod{\nabla f(x_t)}{x_t - x^*}.
	\end{align*}
	Recall that $R = \| x_1-x^*\|$, taking expectation and sum over iterations $k$, we get
	\begin{align*}
	\E[(\tsum_{t=1}^{T/2} \eta_t) (f(\overline{x}_T) - f^*)] \le R^2 +\E[ \sum_{t=1}^{T/2} \eta_t^2  \text{var}_t^2].
	\end{align*}
	Hence by Markov's inequality,  with probability at least $3/4$, 
	\begin{equation}\label{eq:high proba}
	(\tsum_{t=1}^{T/2} \eta_t) (f(\overline{x}_T) -f^*)  \le 4 \E[2 (\sum_{t=1}^{T/2} \eta_t) (f(\overline{x}_T) -f^*)] \le  4 (R^2 + \E[ \sum_{t=1}^{T/2} \eta_t^2 \text{var}_t^2] ) .
	\end{equation}
	Now we can upper bound the right hand side, indeed 
	\begin{align}\label{eq:upper bound right}
	\sum_{t=1}^{T/2} \E[\eta_t^2 \text{var}_t]^2 & = c^2\sum_{t=1}^{T/2} \E \left [\frac{\text{var}_t^2}{(\hat{\sigma}_t + m)^2} \right ] \nonumber \\
	& \le c^2 \left ( \sum_{t=1}^{T/2} \E \left [\frac{\text{var}_t^2 - \hat{\sigma}_t^2
	}{(\hat{\sigma}_t + m)^2} \right ] + \sum_{t=1}^{T/2} \E \left [\frac{ \hat{\sigma}_t^2 
	}{(\hat{\sigma}_t + m)^2} \right ] \right ) \nonumber \\
	& \le c^2 \left ( \frac{1}{m^2} \sum_{t=1}^{T} \E \left [ | \text{var}_t^2 - \hat{\sigma}_t^2|
	\right ] + T  \right ) \nonumber \\
	& \le c^2 \left ( \frac{(M^2+D^2) T^{2/3}\ln(T^{2/3})}{m^2} + T  \right )  \le 3 c^2T
	\end{align}
	The last inequality follows by the choice on $m$. 
	Hence, from Eq. (\ref{eq:high proba}), we have  with probability at least $3/4$, 
	\begin{equation}\label{eq:high proba bound}
	 (\tsum_{t=1}^{T} \eta_t) (f(\overline{x}_T) -f^*) \le  4 (R^2 + 3c^2T )  
	\end{equation} 
	
	Next, by denoting $(x)_+ = \max(x,0)$, we lower bound the left hand side,
	\begin{align}
	\frac{1}{c} \sum \eta_t & = \sum \frac{1}{\hat{\sigma}_t +m} \nonumber \\
	& = \sum \frac{1}{\text{var}_t +m} + \sum \left ( \frac{1}{\hat{\sigma}_t +m} - \frac{1}{\text{var}_t +m} \right ) \nonumber \\
	& \ge \sum \frac{1}{\text{var}_t +m} - \sum  \frac{( \hat{\sigma}_t-\text{var}_t)_+ }{(\text{var}_t +m)(\hat{\sigma}_t +m)} \nonumber \\
	& \ge \sum \frac{1}{\text{var}_t +m} - \sum  \frac{( \hat{\sigma}_t-\text{var}_t)_+  }{ \sqrt{\text{var}_t+m} \cdot m^{3/2} } \nonumber \\
	& \ge \sum \frac{1}{\text{var}_t +m} - \sum  \frac{1}{2} \left (\frac{( \hat{\sigma}_t-\text{var}_t)_+^2  }{m^{3}}  + \frac{1}{\text{var}_t+m} \right )  \nonumber \\
	& = \frac{1}{2} \sum \frac{1}{\text{var}_t +m} - \frac{1}{2m^3} \sum  {(\hat{\sigma}_t-\text{var}_t)_+^2  } \\
	\end{align} 
	
	Note that 
    \begin{align*}
        \frac{1}{\text{var}_t + m} &=  \frac{1}{ \sqrt{\sigma_{2t}^2  + \sigma_{2t + 1}^2} / 2  + m} \ge \frac{1}{ \sigma_{2t}  + \sigma_{2t + 1}  + m} \\
        &\ge  \frac{1}{4} (\frac{1}{ \sigma_{2t} + m} + \frac{1}{ \sigma_{2t + 1}  + m} ) - \frac{1}{m^2} (\sigma_{2t}   - \sigma_{2t + 1}) \\
        &\ge  \frac{1}{4} (\frac{1}{ \sigma_{2t} + m} + \frac{1}{ \sigma_{2t + 1}  + m} ) - \frac{1}{m^2} \abs{\sigma_{2t}^2   - \sigma_{2t + 1}})
    \end{align*}
	
	Therefore, 
	\begin{align}\label{eq:lower bound left}
		\frac{1}{c} \sum \eta_t  \ge \frac{1}{2} \sum_k \frac{1}{\sigma_l +m} - \frac{1}{2m^3} \sum  {(\hat{\sigma}_t-\text{var}_t)_+^2 } - \frac{\sqrt{T}}{m^2} D 
	\end{align}
	
	Finally, by Markov's inequality, with probability $3/4$
	\[ \sum  {(\hat{\sigma}_t-\text{var}_t)_+^2  }  \le 4\E[\sum  {(\hat{\sigma}_k-\text{var}_t)_+^2  }] \le 4\E[\sum  {(\hat{\sigma}_k-\text{var}_t)^2  }]   \le 8(D^2 + M^2 ) T^{2/3} \ln (T^{2/3}). \] 
	Following the choice of 
	$m= 4\sqrt{D^2+M^2} T^{-\frac{1}{9}} \ln(T)^{\frac{1}{2}}$,
	we have 
	\[ \frac{1}{2m^3}\sum_{k=1}^T  {(\hat{\sigma}_k-\text{var}_t)_+^2  }  \le \frac{T}{4(M+m)} \le \frac{1}{4} \sum_{k=1}^T \frac{1}{\sigma_k +m}\] 
	\[
	\frac{\sqrt{T}}{m^2} D \le T/ 8(M+m) \le \frac{1}{8} \sum_{k=1}^T \frac{1}{\sigma_k +m}
	\]
	Consequently, together with (\ref{eq:high proba bound}) and (\ref{eq:lower bound left}), we know that with probability at least $1-\frac{1}{4} - \frac{1}{4} = 1/2$, 
	\begin{align}
	f(\overline{x}_T) - f^* \le \frac{4(R^2+3c^2T) }{\sum_k \frac{c}{8(\sigma_k + m)}} \le \frac{2R}{\sqrt{T}} \cdot \frac{64}{\sum_k \frac{1}{(\sigma_k + m)}},
	\end{align}
	where the last inequality follows by setting $c=\frac{R}{\sqrt{T}}.$
	
	
	
\end{proof}



\end{document}